\newtheorem{theorem}{Theorem}[section]
\newtheorem{lemma}[theorem]{Lemma} 
\newtheorem{corollary}[theorem]{Corollary}
\newtheorem{porism}[theorem]{Porism}
\newtheorem*{main}{Main Theorem}
\newtheorem*{maincor}{Main Corollary}
\theoremstyle{definition} 
\newtheorem*{definition}{Definition} 
\newtheorem{conjecture}{Conjecture} 
\newtheoremstyle{named}%
  {}{}						
  {\upshape}				
  {0pt}{\bfseries}			
  {.}						
  {.5em}					
  {\thmname{#1}\thmnote{ #3}}  
\theoremstyle{named}
\newcommand{\wo}{\setminus}
\newcommand{\card}{\#}
\newcommand{\set}[1]{\left\{{#1}\right\}} 
\newcommand{\setof}[2]{\left\{{#1}\,:\,{#2}\right\}}
\newcommand{\of}{\subseteq}
\newcommand{\intersection}{\bigcap}
\newcommand{\union}{\bigcup}
\newcommand{\0}{\emptyset}
\newcommand{\N}{\mathbb{N}}
\renewcommand{\complement}{\overline}
\renewcommand{\d}{\delta}
\newcommand{\D}{\Delta}
\newcommand{\ind}{i}
\newcommand{\Ind}{{\mathcal I}}
\renewcommand{\k}{k}
\newcommand{\K}{{\mathcal K}}
\newcommand{\w}{w}
\newcommand{\Gp}[1]{G_{#1}}
\newcommand{\fl}{\phi}
\DeclarePairedDelimiter{\abs}{\lvert}{\rvert}
\begin{document}

\title{The maximum number of complete subgraphs in a graph with given maximum degree}
\date{\today}
\author{Jonathan Cutler\\
\small{Montclair State University}\\
\small{\texttt{jonathan.cutler@montclair.edu}}
 \and 
A.J.~Radcliffe\\
\small{University of Nebraska-Lincoln}\\
\small{\texttt{aradcliffe1@math.unl.edu}}}
\maketitle
\begin{abstract}
	Extremal problems involving the enumeration of graph substructures have a long history in graph theory.  For example, the number of independent sets in a $d$-regular graph on $n$ vertices is at most $(2^{d+1}-1)^{n/2d}$ by the Kahn-Zhao theorem \cite{K01,Z}.  Relaxing the regularity constraint to a minimum degree condition, Galvin \cite{G} conjectured that, for $n\geq 2d$, the number of independent sets in a graph with $\delta(G)\geq d$ is at most that in $K_{d,n-d}$.  
	
	In this paper, we give a lower bound on the number of independent sets in a $d$-regular graph mirroring the upper bound in the Kahn-Zhao theorem.  The main result of this paper is a proof of a strengthened form of Galvin's conjecture, covering the case $n\leq 2d$ as well.  We find it convenient to address this problem from the perspective of $\complement{G}$.  In other words, we give an upper bound on the number of complete subgraphs of a graph $G$ on $n$ vertices with $\Delta(G)\leq r$, valid for all values of $n$ and $r$.
\end{abstract}

\maketitle

\section{Introduction} 
\label{sec:introduction}

There has been quite a bit of recent interest in a range of extremal problems involving counting the number of a given type of substructure in a graph.  For instance, the number of independent sets or the number of the complete subgraphs\footnote{We use the term \emph{clique} to refer to a complete subgraph, not necessarily a maximal complete subgraph.} of a graph.  We let $\Ind(G)$ be the set of independent sets in the graph $G$ and $\K(G)$ be the set of cliques in $G$.  We write $\ind(G)$ and $\k(G)$ for $\abs{\Ind(G)}$ and $\abs{\K(G)}$, respectively.

A classic example of type of result we consider is the Kahn-Zhao theorem, proved first for bipartite graphs by Kahn \cite{K01} and later extended to all graphs by Zhao \cite{Z}.

\begin{theorem}[Kahn-Zhao]\label{thm:KZ}
	If $G$ is a $d$-regular graph with $n$ vertices then
	\[
		\ind(G)^{\frac1n} \le \ind(K_{d,d})^{\frac1{2d}}=(2^{d+1}-1)^{\frac{1}{2d}}.
	\]
\end{theorem}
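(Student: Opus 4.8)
The plan is to prove the bipartite case by an entropy argument in the spirit of Kahn, and then to deduce the general case by passing to the bipartite double cover (the trick of Zhao).

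Suppose first that $G$ is bipartite and $d$-regular, with parts $A\sqcup B$, so $\rabs{A}=\rabs{B}=n/2$. Pick $I$ uniformly at random from $\Ind(G)$, so that $\log_2\ind(G)=H(I)$, where $H$ denotes Shannon entropy in base $2$ (and, applied to a scalar in $[0,1]$, binary entropy). By the chain rule $H(I)=H(I\cap A)+H(I\cap B\mid I\cap A)$. Since $G$ is $d$-regular, the sets $\set{N(b):b\in B}$ cover each vertex of $A$ exactly $d$ times, so Shearer's lemma yields $H(I\cap A)\le\tfrac1d\sum_{b\in B}H(I\cap N(b))$; and, by subadditivity of conditional entropy followed by the observation that $I\cap N(b)$ is a function of $I\cap A$ (so conditioning on the latter can only decrease entropy), $H(I\cap B\mid I\cap A)\le\sum_{b\in B}H(\mathbf 1[b\in I]\mid I\cap N(b))$. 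Adding these,
\[
H(I)\le\sum_{b\in B}\Bigl(\tfrac1d H(I\cap N(b))+H(\mathbf 1[b\in I]\mid I\cap N(b))\Bigr).
\]
The crux is then an elementary optimisation showing each summand is at most $\tfrac1d\log_2(2^{d+1}-1)$. Writing $p=\Pr[I\cap N(b)=\0]$ and $\lambda=\Pr[b\in I\mid I\cap N(b)=\0]$, one has $H(\mathbf 1[b\in I]\mid I\cap N(b))=p\,H(\lambda)\le p$ and $H(I\cap N(b))\le H(p)+(1-p)\log_2(2^d-1)$, so the summand is at most $\tfrac1d\bigl(H(p)+(1-p)\log_2(2^d-1)\bigr)+p$; this (concave) function of $p$ is maximised at $p=2^d/(2^{d+1}-1)$, where a short computation gives value exactly $\tfrac1d\log_2(2^{d+1}-1)$. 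Summing over the $n/2$ vertices of $B$ gives $\log_2\ind(G)\le\tfrac{n}{2d}\log_2(2^{d+1}-1)$, which is the claim, since $\ind(K_{d,d})=2^{d+1}-1$ (subsets of one side or of the other, sharing only $\0$).

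For a general $d$-regular $G$ on $n$ vertices, I would pass to the bipartite double cover $\widetilde G=G\times K_2$, with vertex set $V(G)\times\set{0,1}$ and $(u,0)\adj(v,1)$ exactly when $u\adj v$ in $G$. Then $\widetilde G$ is bipartite, $d$-regular, and has $2n$ vertices, so the bipartite case already established gives $\ind(\widetilde G)^{\frac1{2n}}\le(2^{d+1}-1)^{\frac1{2d}}$. It then suffices to prove the combinatorial inequality $\ind(G)^2\le\ind(\widetilde G)$, as this gives $\ind(G)^{\frac1n}=\bigl(\ind(G)^2\bigr)^{\frac1{2n}}\le\ind(\widetilde G)^{\frac1{2n}}\le(2^{d+1}-1)^{\frac1{2d}}$. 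For this inequality I would start from the identification of $\Ind(\widetilde G)$ with the family of pairs $(S,T)$, $S,T\of V(G)$, having no edge of $G$ between $S$ and $T$, i.e.\ $T\cap N(S)=\0$; summing over $S$ gives $\ind(\widetilde G)=\sum_{S\of V(G)}2^{\,n-\rabs{N(S)}}$. When $G$ is bipartite with parts $P\sqcup Q$, the involution $(S,T)\mapsto\bigl((S\cap P)\cup(T\cap Q),\,(T\cap P)\cup(S\cap Q)\bigr)$ restricts to a bijection from pairs of independent sets of $G$ onto these edge-free pairs, so $\ind(G)^2=\ind(\widetilde G)$ there; for general $G$ one upgrades this to the stated inequality, which is Zhao's lemma (to be cited or reproved).

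The main obstacle lies in two places. In the bipartite step it is the local optimisation: the factor $\tfrac1d$ sits on the first term only, so the crude bound $H(I\cap N(b))+H(\mathbf 1[b\in I]\mid I\cap N(b))\le\log_2(2^{d+1}-1)$ — coming from the $2^{d+1}-1$ feasible configurations of $b$ together with $N(b)$ — does not control the summand, and one must carry the optimisation through (which is exactly where the constant $(2^{d+1}-1)^{1/2d}$ is forced). In the reduction step it is the inequality $\ind(G)^2\le\ind(G\times K_2)$, which is mildly counterintuitive and for which the obvious attempt at an explicit injection on pairs of independent sets succeeds only in the bipartite case, so a more delicate argument is needed.
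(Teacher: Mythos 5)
The paper never proves Theorem~\ref{thm:KZ}: it is quoted as background and attributed to Kahn \cite{K01} and Zhao \cite{Z}, so there is no in-paper argument to compare yours against; what you have written is a reconstruction of the original Kahn--Zhao proof, and the parts you actually carry out are correct. In the bipartite step, the chain-rule/Shearer decomposition is the right one, the bounds $H(\mathbf 1[b\in I]\mid I\cap N(b))\le p$ and $H(I\cap N(b))\le H(p)+(1-p)\log_2(2^d-1)$ are valid, and the optimization does close: at $p=2^d/(2^{d+1}-1)$ one has $H(p)+(1-p)\log_2(2^d-1)=\log_2(2^{d+1}-1)-dp$, so each summand is exactly $\tfrac1d\log_2(2^{d+1}-1)$, giving $\ind(G)\le(2^{d+1}-1)^{n/2d}$ as claimed. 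The reduction via the bipartite double cover is also set up correctly, and your involution does prove $\ind(G)^2=\ind(G\times K_2)$ when $G$ is bipartite. The one substantive piece you do not supply is the inequality $\ind(G)^2\le\ind(G\times K_2)$ for arbitrary $G$: as you yourself note, this is not a routine upgrade of the bipartite bijection (the naive pairing really does fail once odd structure is present), and it is precisely the main content of Zhao's paper, proved there by his bipartite swapping trick applied to the components of $G[A\symd B]$, which carry all the crossing edges between two independent sets $A$ and $B$. Citing that lemma, as you propose, is consistent with the paper's own treatment, which cites the entire theorem; but if a self-contained proof is intended, that deferred lemma is the only genuine gap in your write-up.
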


The Kahn-Zhao theorem is tight when $n$ is a multiple of $2d$ with $\frac{n}{2d}K_{d,d}$, i.e., $\frac{n}{2d}$ copies of $K_{d,d}$, achieving equality in the bound.  Little is known about extremal examples when $2d$ does not divide $n$.  One result of this paper is a corresponding theorem proving 
\[
	\ind(G)^{1/n}\geq \ind(K_{d+1})^{1/(d+1)}
\]
for $G$ a $d$-regular graph on $n$ vertices.

Extremal enumeration problems for complete subgraphs run in parallel to those for independent sets.  If $G$ is a graph with $N$ independent sets, then $\complement{G}$ is a graph with $N$ cliques.  Any degree condition on $G$ translates into a corresponding degree condition on $\complement{G}$.  For example, the Kahn-Zhao theorem can be rephrased as follows: if $G$ is an $r$-regular graph on $n$ vertices, then 
\[
	\k(G)^{\frac{1}n}\leq k(2K_{n-1-r})^{\frac{1}{2(n-1-r)}}.
\]
Note, however, that in the intuitively natural regime where $r$ is fixed and $n$ is large, we do not expect this bound to be tight.

Although regularity is a very natural condition to impose, a range of other conditions have been studied.  For instance, it is a consequence of the Kruskal-Katona theorem \cite{K63,K68} that among all graphs of given average degree, the lex graph\footnote{The \emph{lex graph with $n$ vertices and $m$ edges}, denoted $L(n,m)$, has vertex set $[n]=\set{1,2,\ldots,n}$ and edge set an initial segment of size $m$ in $\binom{[n]}{2}$ according to the lexicographic order.} has the largest number of independent sets, indeed the largest number of independent sets of any fixed size.  For a derivation, see, e.g., \cite{CR11}.  Another example is the oft-rediscovered result originally due to Zykov \cite{Z49} (see also \cite{E62, S71, H76, R}) which bounds the number of cliques in graphs with bounded clique number, $\omega(G)$.

\begin{theorem}[Zykov]
	If $G$ is a graph with $n$ vertices and $\omega(G)\leq \omega$, then 
	\[
		\k(G)\leq \k(T_{n,\omega}),
	\]
	where $T_{n,\omega}$ is the Tur\'an graph with $\omega$ parts.
\end{theorem}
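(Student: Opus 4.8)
The plan is to combine a one–shot symmetrization, which replaces $G$ by the join of an independent set with a smaller graph without decreasing the number of cliques or increasing the clique number, with a short arithmetic lemma that disposes of complete multipartite graphs directly. For a vertex $x$ of $G$ write $f(x)$ for the number of cliques of $G$ containing $x$, so that $f(x) = \k(G[N(x)])$. First I would fix a vertex $a$ maximizing $f$, set $A = V(G)\wo N(a)$ (so $a \in A$ and $\abs A \ge 1$), and form $G^{*}$ from $G[N(a)]$ by adding $\abs A$ new, pairwise non-adjacent vertices, each joined to every vertex of $N(a)$. Since a clique of $G^{*}$ uses at most one new vertex, $\omega(G^{*}) = 1 + \omega(G[N(a)])$, and $\omega(G[N(a)]) + 1 \le \omega(G)$ because $\{a\}$ together with any clique of $G[N(a)]$ is a clique of $G$; hence $\omega(G^{*}) \le \omega(G)$. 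On the other hand $\k(G^{*}) = (1+\abs A)\,\k(G[N(a)]) = (1+\abs A)f(a)$, whereas, splitting the cliques of $G$ according to whether they meet $A$, $\k(G) = f(a) + \#\{C \in \K(G) : C \cap A \ne \emptyset\} \le f(a) + \sum_{x\in A} f(x) \le (1 + \abs A)f(a)$, the first inequality a union bound over $A$ and the second by the maximality of $f(a)$. Thus $\k(G^{*}) \ge \k(G)$: passing from $G$ to $G^{*}$ can only help.

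Next I would induct on $\omega(G)$, the base case being the empty graph (and, one step up, the edgeless graph, for which $\k(G) = n+1 = \k(T_{n,1})$). For the inductive step, set $H = G[N(a)]$ and $m = \abs{V(H)} = n - \abs A$; then $\omega(H) \le \omega(G) - 1$, so by the induction hypothesis $\k(H) \le \k(T_{m,\omega(G)-1})$, and therefore $\k(G) \le \k(G^{*}) = (1 + \abs A)\,\k(H) \le (1+\abs A)\,\k(T_{m,\omega(G)-1})$.

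Finally I would record the arithmetic: a complete $k$-partite graph with part sizes $a_{1}, \dots, a_{k}$ has exactly $\prod_{i=1}^{k}(1 + a_{i})$ cliques, since a clique chooses at most one vertex from each part; and among all ways of writing $n$ as a sum of $k$ nonnegative integers this product is maximized by the balanced choice, i.e.\ by $T_{n,k}$, because replacing a pair $(a_{i}, a_{j})$ with $a_{i} \ge a_{j} + 2$ by $(a_{i} - 1, a_{j} + 1)$ strictly increases $(1+a_{i})(1+a_{j})$. Applying this to the composition $(\abs A, a_{1}, \dots, a_{\omega(G)-1})$ of $n$ into $\omega(G)$ parts, with $(a_{1}, \dots, a_{\omega(G)-1})$ the balanced composition of $m$, gives $(1+\abs A)\,\k(T_{m,\omega(G)-1}) \le \k(T_{n,\omega(G)}) \le \k(T_{n,\omega})$, completing the induction. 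The one step that needs genuine care is the inequality $\k(G^{*}) \ge \k(G)$ — the union bound over $A$ has to be married to the extremality of $f(a)$, and the degenerate cases where $a$ is isolated (so $A = V(G)$ and $H$ is empty) or $G$ is edgeless should be checked separately; everything else is bookkeeping.
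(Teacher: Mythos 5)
Your proof is correct, but note that the paper itself does not prove this statement: Zykov's theorem is quoted in the introduction as a known result, with references to Zykov and later rediscoveries, so there is no in-paper argument to compare against. Your route is essentially the classical symmetrization proof, packaged in a clean ``one-shot'' form: pick $a$ maximizing $f(x)=\k(G[N(x)])$, bound $\k(G)\le f(a)+\sum_{x\in A}f(x)\le(1+\abs A)f(a)=\k(G^{*})$ (the union bound plus extremality step is exactly right, and the cliques avoiding $A$ are precisely the $f(a)$ cliques inside $N(a)$, empty clique included, matching the paper's convention that $\k$ counts $\emptyset$), then recurse on $H=G[N(a)]$ and finish with the product formula $\prod_i(1+a_i)$ for complete multipartite graphs and the balancing inequality. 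This has a different flavor from the local double-counting the paper uses for its own bounds (Theorems~\ref{thm:ind_signpost} and \ref{thm:k_signpost}), but it is a complete and standard argument for Zykov. Two small bookkeeping points: phrase the induction on the bound $\omega$ rather than on $\omega(G)$ itself (or equivalently invoke the monotonicity of $\k(T_{m,k})$ in $k$, which your balancing lemma already yields by allowing parts of size zero), since you apply the hypothesis to $H$ with the parameter $\omega(G)-1$ even though $\omega(H)$ may be smaller; and the degenerate case you flag (the maximizer $a$ isolated) only occurs when $G$ is edgeless, where the claim is immediate, so it causes no trouble.
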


Equivalently, this gives a bound on $i(G)$ for graphs with bounded independence number; the extremal graph is a union of disjoint complete graphs of almost equal sizes.

The main problem we discuss in this paper is that of computing, given $n$ and $r$,
\[
	\max\setof{\k(G)}{\text{$G$ is a graph on $n$ vertices with $\D(G)\le r$}}.
\]
This problem is equivalent to that of determining
\[
	\max\setof{\ind(G)}{\text{$G$ is a graph on $n$ vertices with $\d(G)\ge d$}},
\]
where $d=n-1-r$.  Galvin \cite{G} made the following conjecture.

\begin{conjecture}[Galvin]\label{conj:Galvin}
	If $G$ is a graph on $n$ vertices with minimum degree at least $d$, where $n\geq 2d$, then $i(G)\leq i(K_{d,n-d})$.
\end{conjecture}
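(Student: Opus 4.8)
Following the remark in the introduction, I would work throughout with $\complement G$. Writing $r=n-1-d$, a graph $G$ has $\d(G)\ge d$ precisely when $\D(\complement G)\le r$, while $\ind(G)=\k(\complement G)$ and $\complement{K_{d,n-d}}=K_{r+1}\cup K_{n-1-r}$; so Galvin's conjecture asserts that $\k(H)\le\k(K_{r+1}\cup K_{n-1-r})$ whenever $\D(H)\le r$ and $r+1\le n\le 2r+2$. I propose to prove the following statement, valid for all $n$ and $r$, which specializes to Galvin's conjecture on that range: \emph{if $\D(H)\le r$ then $\k(H)\le\k(\Gp n)$}, where $\Gp n$ is the disjoint union of $\down{n/(r+1)}$ copies of $K_{r+1}$ together with one copy of $K_b$, $b\equiv n\pmod{r+1}$ (and $K_0$ empty). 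The whole argument is an induction on $n$.

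The small case and the disconnected case are routine. If $n\le r+1$ then $\k(H)\le 2^n=\k(K_n)=\k(\Gp n)$. If $H=H_1\cup H_2$ with $\abs{V(H_i)}=n_i$, then $\k(H)=\k(H_1)+\k(H_2)-1$ since every nonempty clique lies in one component, so by induction $\k(H)\le\k(\Gp{n_1})+\k(\Gp{n_2})-1=\k(\Gp{n_1}\cup\Gp{n_2})$; and $\Gp{n_1}\cup\Gp{n_2}$ is a disjoint union of cliques of orders at most $r+1$ summing to $n$, so — since $m\mapsto 2^m-1$ is convex and hence the number of nonempty cliques of such a union is maximized when as many parts as possible equal $r+1$ — we get $\k(\Gp{n_1}\cup\Gp{n_2})\le\k(\Gp n)$.

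Now suppose $H$ is connected with $n\ge r+2$. The key structural fact is that $H$ is $K_{r+1}$-free: a $K_{r+1}$ inside a graph of maximum degree $r$ is a connected component by itself (each of its vertices already has all $r$ of its neighbours inside it), which a connected graph on more than $r+1$ vertices cannot contain. Hence for each vertex $v$ the graph $H[N(v)]$ is $K_r$-free on at most $r$ vertices, so Zykov's theorem gives $\k(H[N(v)])\le\k(T_{r,r-1})=3\cdot2^{r-2}$, and even $\k(H[N(v)])\le 2^{\deg v}\le 2^{r-1}$ when $\deg v<r$. If $H$ is not $r$-regular, pick $v$ of minimum degree; if $H$ is $r$-regular, pick $v$ minimizing $\k(H[N(v)])$ and bound $\min_v\k(H[N(v)])$ by averaging via $\sum_v\k(H[N(v)])=\sum_{\0\ne S\in\K(H)}\abs{S}$, estimating the right side using $\abs{E(H)}\le rn/2$ and the restrictions that $K_{r+1}$-freeness places on clique counts. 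Either way one gets a vertex $v$ with $\k(H[N(v)])$ small. Splitting $\k(H)=\k(H-v)+\k(H[N(v)])$, the graph $H-v$ is again of maximum degree $\le r$ and still $K_{r+1}$-free, so I would strengthen the inductive hypothesis to include a second, strictly smaller bound $\k(H')\le\k(\Gp m^{\ast})$ for $K_{r+1}$-free $H'$ of maximum degree $\le r$ on $m$ vertices (with $\Gp m^{\ast}$ the corresponding extremal $K_{r+1}$-free graph, whose shape — a cycle when $r=2$, a union of near-cliques for larger $r$ — itself needs to be determined), proved by the same induction and applied here to $H-v$. It then remains to verify the (rather tight) arithmetic inequality $\k(\Gp{n-1}^{\ast})+\k(H[N(v)])\le\k(\Gp n)$, together with the following stability fact: whenever $H-v$ is \emph{close} to the $K_{r+1}$-free optimum it must have only a few vertices of degree $<r$, so the connected graph $H$ can attach its extra vertex $v$ only at those few places, forcing $\k(H[N(v)])$ to be correspondingly small. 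It is this trade-off — "$H-v$ far from optimal" versus "$v$ badly constrained" — rather than a single crude estimate, that makes the arithmetic close.

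The step I expect to be the obstacle is precisely that last point: pinning down the $K_{r+1}$-free extremal family $\Gp m^{\ast}$, proving its bound with enough control of the near-extremal configurations, and checking the closing arithmetic, which is tight. The tightness is genuine rather than an artifact: for $r=2$ the graphs $C_4=K_{2,2}$ and $C_5$ (and small cocktail-party graphs) attain equality in the target bound, so no uniform additive slack is available, and $C_5$ in particular is not captured by the generic inductive step; I would absorb such sporadic graphs into a finite case check. (The parallel lower bound $\ind(G)^{1/n}\ge(d+2)^{1/(d+1)}$ for $d$-regular $G$ is independent of this circle of ideas and I would establish it separately, directly from the component structure of $\complement G$.)
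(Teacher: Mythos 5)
Your reduction to the complement and your target statement ($\k(H)\le\k(aK_{r+1}\cup K_b)$ for $\D(H)\le r$, all $n$) coincide with the paper's Main Theorem, and your small/disconnected cases are fine. But the connected case is not a proof: the three ingredients that would carry the entire load are explicitly left open. You need (i) the extremal bound $\k(H')\le\k(G_m^{\ast})$ for $K_{r+1}$-free graphs of maximum degree at most $r$, for an extremal family $G_m^{\ast}$ you admit you have not determined; (ii) a stability statement saying near-extremal $K_{r+1}$-free graphs have few vertices of degree less than $r$; and (iii) the closing inequality $\k(G_{n-1}^{\ast})+\k(H[N(v)])\le\k(G_n)$, which you acknowledge is tight. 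This is not a peripheral verification: when $n\equiv b\pmod{r+1}$ with $b$ small, the increment $\k(G_n)-\k(G_{n-1})$ is only about $2^{b-1}$, while $\k(H[N(v)])$ can be of order $2^{r}$ (your Zykov bound gives $3\cdot2^{r-2}$), so the whole argument hinges on quantifying how far below $\k(G_{n-1})$ the $K_{r+1}$-free optimum sits and how the connectivity constraint on where $v$ attaches compensates --- precisely the ``trade-off'' you defer. Determining the $K_{r+1}$-free extremum under a maximum degree condition, with stability, is a problem of at least the same difficulty as the original theorem (note that the $K_{r+1}$-free situation is exactly the hard case: if $K_{r+1}\of G$ the paper, like you, is done by induction), so the proposal in effect reduces the theorem to an unproven statement that is no easier.

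The paper avoids all three ingredients. Instead of deleting a vertex, it takes a maximal counterexample and does local surgery on a \emph{tight} clique $T$ (one with the full complement of $r+1-\abs{T}$ common neighbors): filling in $S_T$ and cutting the outside edges produces $G_T$ with $\D(G_T)\le r$, and the ``fixed loss'' parameter $\fl(R_T)$ controls $\k(G_T)-\k(G)$ (Lemma~\ref{lem:kGS}). Either some tight clique has small fixed loss, and then $\k(G_T)>\k(G)$ contradicts maximality (or creates a $K_{r+1}$, handled by induction), or every cluster has large fixed loss, and then a discharging argument (Lemma~\ref{lem:clusnum}) shows the weighted counts still satisfy the strong inequalities $\k_t(G)\le\frac{r-t+1}{t}\k_{t-1}(G)$, which give the crude but sufficient bound $\k(G)\le 1+\frac{n}{r-1}(2^r-2)<\k(aK_{r+1}\cup K_b)$ (Lemmas~\ref{lem:strongimps} and \ref{lem:strong}). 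That route never needs the $K_{r+1}$-free extremal graph, any stability analysis, or a tight final inequality. If you want to pursue your vertex-deletion scheme, the missing extremal-plus-stability theorem for $K_{r+1}$-free bounded-degree graphs is the concrete obstacle you must supply; as written, the proposal has a genuine gap there.
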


Galvin proved in \cite{G} that the conjecture holds for fixed $d$ and $n$ sufficiently large, and also for all $n$ with $d=1$.  Engbers and Galvin \cite{EG} proved the conjecture when $d=2$ or $3$.  Alexander and Mink together with the first author \cite{ACM} proved the conjecture for bipartite graphs.

The main result of this paper is to prove a strengthened version of Conjecture~\ref{conj:Galvin}.  It is convenient for us to phrase our theorem in the language of cliques.  In the process of taking complements, the minimum degree condition is replaced by a maximum degree condition.  From this perspective, we are able to remove the condition from the conjecture relating $n$ and $d$.  We prove the following theorem, and hence the subsequent corollary.

\begin{main}
	For all $n,r\in \N$, write $n=a(r+1)+b$ with $0\le b\le r$. If $G$ is a graph on $n$ vertices with $\Delta(G)\leq r$, then 
	\[
		\k(G) \leq \k(aK_{r+1} \cup K_b),
	\]
	with equality if and only if $G=aK_{r+1}\cup K_b$, or $r=2$ and $G=(a-1)K_3\cup C_4$ or $(a-1)K_3\cup C_5$.
\end{main}

\begin{maincor}
	For all $n,d\in \N$, write $n=a(n-d)+b$ with $0\le b< n-d$. If $G$ is a graph on $n$ vertices with $\delta(G)\geq d$, then 
	\[
		\ind(G) \leq \ind\left(\,\complement{aK_{n-d} \cup K_b}\,\right)=a(2^{n-d}-1)+2^b.
	\]
\end{maincor}

In the low density regime, i.e., for $n\geq 2d$, the extremal graph is indeed $K_{d,n-d}$.  For the high density regime, where $a\geq 2$, the extremal graphs become more and more regular and we believe that this bound is the best known even in the regular case.

In Section~\ref{sec:signposts} we prove a weak form of the Main Theorem, valid only for $n$ a multiple of $r+1$, proved already by Engbers and Galvin \cite{EG}.  There is a striking similarity of proof technique between this result and the lower bound mentioned above on $i(G)^{1/n}$ for $G$ a regular graph, so we include both in the same section.  The argument for the weak bound essentially forms the kernel of the proof of the Main Theorem.  We outline that proof in Section~\ref{sec:outline}.  In Sections~\ref{sec:FL}, \ref{sec:strong}, and \ref{sec:discharging}, we introduce the main tools of the proof.  Finally, in Section~\ref{sec:proof}, we prove the Main Theorem.


\section{Weak bounds} 
\label{sec:signposts}

In the section we prove two simple results that illustrate some of the methods we will use later in the paper. The first is a Kahn-Zhao type result (though substantially easier to prove) concerning the minimum number of independent sets in a $d$-regular graph.  We let $\Ind_t(G)$ be the set of independent sets of size $t$ in $G$ and $\ind_t(G)=\abs{\Ind_t(G)}$.

\begin{theorem}\label{thm:ind_signpost}
	If $G$ is $d$-regular on $n=a(d+1)$ vertices, then
	\[
		\ind(G) \ge \ind(aK_{d+1}) = (d+2)^a.
	\]
	Indeed, for all $0\leq t\leq n$ we have $\ind_t(G)\ge \ind_t(aK_{d+1})=(d+1)^t\binom{a}t$.
\end{theorem}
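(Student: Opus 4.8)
The plan is to prove the stronger statement about $\ind_t(G)$, since summing over $t$ gives the first inequality (and $\sum_t (d+1)^t\binom{a}{t} = (d+2)^a$ by the binomial theorem). The natural approach is a counting/averaging argument that builds an independent set of size $t$ greedily, one vertex at a time, and shows that at each step there are "enough" choices. More precisely, I would count ordered sequences $(v_1,\dots,v_t)$ of distinct vertices forming an independent set, and compare the number of such sequences in $G$ with the number in $aK_{d+1}$, which is $a(a-1)\cdots(a-t+1)\cdot (d+1)^t$ — indeed, in $aK_{d+1}$ an independent set of size $t$ is obtained by choosing $t$ distinct cliques and one vertex from each. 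So the target identity is $t!\,\ind_t(aK_{d+1}) = a^{\underline{t}}(d+1)^t$, matching $\ind_t(aK_{d+1}) = (d+1)^t\binom{a}{t}$.

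The key step is to show that the number of ordered independent $t$-sequences in any $d$-regular $G$ on $n = a(d+1)$ vertices is at least $a^{\underline{t}}(d+1)^t$. I would prove this by induction on $t$, but the induction needs to be set up carefully: after choosing an independent set $S$ of size $s$, the remaining graph $G - N[S]$ is not regular, so a naive induction fails. Instead I would track a weighted count. A clean way: let $f_s$ denote the number of ordered independent $s$-sequences, and show $f_{s+1} \geq f_s \cdot (\text{average available degree of freedom})$. For each independent $S$ of size $s$, the number of vertices in $N[S]$ is at most $s(d+1)$ (each vertex of $S$ closes off itself plus its $d$ neighbors), so at least $n - s(d+1) = (a-s)(d+1)$ vertices remain available to extend $S$. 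Hence $f_{s+1} \geq f_s \cdot (a-s)(d+1)$, and iterating from $f_0 = 1$ gives $f_t \geq \prod_{s=0}^{t-1}(a-s)(d+1) = a^{\underline{t}}(d+1)^t$, as desired. Dividing by $t!$ yields $\ind_t(G) \geq (d+1)^t\binom{a}{t}$.

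The main obstacle — and the place where $d$-regularity is genuinely used — is the bound $|N[S]| \leq s(d+1)$ combined with the fact that $n$ is exactly $a(d+1)$; without the exact count on $n$ the telescoping would not land on the clean product, and without regularity (only $\Delta(G) \le d$, say) one would still get $|N[S]| \le s(d+1)$ but the extremal configuration analysis changes. I should double-check the edge case where $|N[S]| < s(d+1)$ strictly: this only helps the inequality, so it is fine, but it also indicates where slack accumulates, which is relevant if one later wants the equality characterization. For the equality statement (not asked here but worth a remark), equality throughout forces each greedy step to remove exactly $d+1$ fresh vertices, which forces $G = aK_{d+1}$. I expect the whole argument to be short; the only subtlety is organizing the induction so that the non-regularity of $G - N[S]$ never needs to be confronted directly — tracking only the count of available vertices, rather than trying to recurse on a smaller regular graph, is what makes this work.
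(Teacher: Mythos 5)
Your proposal is correct and is essentially the paper's own argument: your bound $f_{s+1}\ge f_s\,(a-s)(d+1)$ via $\abs{N[S]}\le s(d+1)$ is exactly Lemma~\ref{lem:ind_upwards}, and counting ordered independent sequences and dividing by $t!$ is the same computation as the paper's double-counting induction in the proof of Theorem~\ref{thm:ind_signpost}. No gaps; the only cosmetic difference is ordered sequences versus the identity $\ind_t(G)=\frac1t\sum_{I\in\Ind_{t-1}(G)}\card\setof{J\in\Ind_t(G)}{J\supseteq I}$.
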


The essential part of the proof of this theorem is contained in the following lemma.

\begin{lemma}\label{lem:ind_upwards}
	If $G$ is $d$-regular on $n=a(d+1)$ vertices then for all $1\leq t\leq n$ and $I\in \Ind_{t-1}(G)$ we have
	\[
		\card\setof{J\in\Ind_t(G)}{J\supseteq I} \ge (a-t+1)(d+1).
	\]
\end{lemma}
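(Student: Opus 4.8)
The plan is to count directly the vertices that can extend $I$ to an independent set of size $t$. Given $I\in\Ind_{t-1}(G)$, a vertex $v$ yields $J=I\cup\set{v}\in\Ind_t(G)$ with $J\supseteq I$ precisely when $v\notin I$ and $v$ has no neighbor in $I$; conversely every such $J$ arises in this way from the unique vertex $v=J\wo I$, and distinct choices of $v$ give distinct sets $J$. Hence the quantity we must bound from below is exactly $n-\abs{I\cup N(I)}$, where $N(I)=\union_{u\in I}N(u)$ denotes the neighborhood of $I$.

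The key estimate is a crude union bound. Since $G$ is $d$-regular, $\abs{N(u)}=d$ for each $u\in I$, so
\[
	\abs{I\cup N(I)} \le \abs{I}+\sum_{u\in I}\abs{N(u)} = (t-1)+(t-1)d = (t-1)(d+1).
\]
Combining this with $n=a(d+1)$ gives
\[
	\card\setof{J\in\Ind_t(G)}{J\supseteq I} = n-\abs{I\cup N(I)} \ge a(d+1)-(t-1)(d+1) = (a-t+1)(d+1),
\]
which is the claimed inequality.

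There is essentially no obstacle: the whole content is the observation that the ``forbidden'' set $I\cup N(I)$ has size at most $(t-1)(d+1)$. The only subtlety worth flagging is that the bound becomes vacuous once $t>a$, since then the right-hand side is non-positive; this is consistent with the fact that $\ind_t(aK_{d+1})=0$ for $t>a$, and for $t\le a$ one recovers Theorem~\ref{thm:ind_signpost} by multiplying these per-step lower bounds along a chain $\0=I_0\subset I_1\subset\cdots\subset I_t$ and dividing by $t!$ to account for overcounting orderings. The same ``shadow of a partial structure is at most its size times one more than the degree'' idea, suitably dualized to cliques and maximum degree, is the engine behind the weak form of the Main Theorem in this section.
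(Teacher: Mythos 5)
Your proof is correct and follows essentially the same argument as the paper: both count the extensions of $I$ as the vertices outside $\union_{x\in I}N[x]$ (your $I\cup N(I)$) and bound that forbidden set by $(t-1)(d+1)$ using $d$-regularity.
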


\begin{proof}
	The number on the left is exactly the number of common non-neighbors of $I$ (other than the elements of $I$). This is exactly
	\[
		\abs[\Big]{V(G) - \union_{x\in I} N[x]} \ge n - (t-1) (d+1) = (a-t+1)(d+1), 
	\]
	where $N[x]$ is the closed neighborhood of $x$.
\end{proof}

\begin{proof}[Proof of Theorem~\ref{thm:ind_signpost}]
	We'll prove the stronger statement by induction. Certainly, since $e(G) = e\left(aK_{d+1}\right)$ the statement is true for $t=2$.  (The statement is trivial for $t=0,1$.) Suppose now that $t>2$. By double-counting, we have
	\begin{align*}
		\ind_t(G) &= \frac1t \sum_{I\in \Ind_{t-1}(G)} \card\setof{J\in\Ind_t(G)}{J\supseteq I} \\
			&\ge \frac1t  (a-t+1)(d+1)\, i_{t-1}(G)\\
			&\ge \frac{(a-t+1)(d+1)}t\, i_{t-1}(aK_{d+1})  \\
			&= \ind_t(aK_{d+1}).\qedhere
	\end{align*}
\end{proof}

\begin{corollary}\label{cor:minind}
	If $G$ is $d$-regular on $n$-vertices then 
	\[
		\ind(G)^{\frac{1}{n}} \ge \ind(K_{d+1})^{\frac1{d+1}} = (d+2)^{\frac1{d+1}}
	\]
\end{corollary}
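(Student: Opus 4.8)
The corollary asks us to show $\ind(G)^{1/n} \ge (d+2)^{1/(d+1)}$ for $G$ a $d$-regular graph on $n$ vertices, where now $n$ need *not* be a multiple of $d+1$. The plan is to reduce to the case covered by Theorem \ref{thm:ind_signpost} by a standard "tensor power" / blow-up trick: take disjoint copies of $G$. Since the disjoint union of graphs multiplies the independent set count, we have $\ind(mG) = \ind(G)^m$, and $mG$ is still $d$-regular, now on $mn$ vertices.

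The key step is to choose $m$ so that $mn$ becomes a multiple of $d+1$, or — more robustly — to let $m \to \infty$ and track the error. Concretely, for each $m$ write $mn = a_m(d+1) + b_m$ with $0 \le b_m \le d$. One clean route: take $m = d+1$, so that $mn = n(d+1)$ is exactly a multiple of $d+1$. Then Theorem \ref{thm:ind_signpost} applies directly to $mG$, giving
\[
	\ind(G)^{d+1} = \ind\bigl((d+1)G\bigr) \ge (d+2)^{n},
\]
since $(d+1)G$ has $n(d+1)$ vertices, so $a = n$. Taking $(d+1)n$-th roots yields $\ind(G)^{1/n} \ge (d+2)^{1/(d+1)}$, which is exactly the claim. (The identification $\ind(K_{d+1}) = d+2$ and the exponent $1/(d+1)$ come from the fact that $K_{d+1}$ has $d+1$ vertices and $d+2$ independent sets.)

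I do not anticipate a genuine obstacle here — the only thing to be careful about is making sure the hypotheses of Theorem \ref{thm:ind_signpost} are met for $mG$: it is indeed $d$-regular (disjointness preserves degrees), and with $m = d+1$ the vertex count $mn$ is automatically of the form $a(d+1)$, so no divisibility side-condition is violated. If one instead wanted to avoid privileging $m = d+1$, the alternative is to use general $m$, apply the theorem to get $\ind(G)^m \ge \ind(a_m K_{d+1} \cup K_{b_m})^{\cdots}$ — but this requires the $\cup K_b$ version, which is not what Theorem \ref{thm:ind_signpost} states; hence the $m = d+1$ choice is the cleanest and is the one I would write up.
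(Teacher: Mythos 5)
Your proof is correct and is essentially identical to the paper's: the paper also applies Theorem~\ref{thm:ind_signpost} to the disjoint union $(d+1)G$, which is $d$-regular on $n(d+1)$ vertices (so $a=n$), and uses $\ind(G)^{d+1}=\ind((d+1)G)\ge \ind(nK_{d+1})=\ind(K_{d+1})^n$. No differences worth noting.
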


\begin{proof}
\[
	\ind(G)^{d+1} = \ind((d+1)G) \ge \ind(nK_{d+1}) = \ind(K_{d+1})^n.\qedhere
\]
\end{proof}

Noting that our proof of Lemma~\ref{lem:ind_upwards} only required an upper bound on the degrees of vertices in $G$, we also get the following result.

\begin{porism}
	If $G$ is a graph on $n$ vertices with $\Delta(G)\le d$ then
	\[
		\ind(G)^{\frac1n} \ge \ind(K_{d+1})^{\frac{1}{d+1}}.
	\]
\end{porism}

Our second ``signpost'' result is a best possible bound on $\k(G)$ for graphs with $\Delta(G)\leq r$, valid only when $r+1$ divides $n(G)$.  This result appears in a paper of Engbers and Galvin \cite{EG}, but it's important for the development of the rest of the paper that we include the proof.  We let $\K_t(G)$ be the set of cliques of $G$ of size $t$ and set $\k_t(G)=\abs{\K_t(G)}$.

\begin{theorem}[Engbers, Galvin]\label{thm:k_signpost}
	If $G$ is a graph on $n=a(r+1)$ vertices and $\Delta(G)\leq r$, then
		\[
			\k(G) \le \k\bigl(a K_{r+1}\bigr) = 1+a \bigl(2^{r+1}-1\bigr).
		\]
		Indeed, for $0\leq t\leq n$, we have $\k_t(G)\leq \k_t(aK_{r+1})$.
\end{theorem}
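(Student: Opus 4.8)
The natural approach is to mirror exactly the proof of Theorem~\ref{thm:ind_signpost}: prove the refined statement $\k_t(G)\le\k_t(aK_{r+1})$ by induction on $t$, using a double-counting step that relates $\k_t$ to $\k_{t-1}$ via an "upward extension" estimate. So the first step will be to isolate the clique analogue of Lemma~\ref{lem:ind_upwards}: given a clique $K\in\K_{t-1}(G)$, how many cliques $K'\in\K_t(G)$ contain it? Such a $K'$ is obtained by adjoining a vertex $v$ adjacent to all of $K$, i.e.\ a common neighbor of $K$. Unlike the independent-set case, here we want an \emph{upper} bound on the number of common neighbors, and here is where the maximum degree comes in: any vertex $x\in K$ has at most $r$ neighbors, so the common neighborhood $\bigcap_{x\in K}N(x)$ has size at most $r-(t-2)$ (since $K\setminus\{x\}$ already uses up $t-2$ of $x$'s neighbors). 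Thus
\[
	\card\setof{K'\in\K_t(G)}{K'\supseteq K}\le r-t+2 .
\]

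The second step is the double-counting identity and induction. For $t\ge 2$,
\[
	\k_t(G)=\frac1t\sum_{K\in\K_{t-1}(G)}\card\setof{K'\in\K_t(G)}{K'\supseteq K}
		\le\frac{r-t+2}{t}\,\k_{t-1}(G)
		\le\frac{r-t+2}{t}\,\k_{t-1}(aK_{r+1}),
\]
using the inductive hypothesis in the last step, and then one checks that the right-hand side equals $\k_t(aK_{r+1})=a\binom{r+1}{t}$ — this is just the identity $\binom{r+1}{t}=\frac{r-t+2}{t}\binom{r+1}{t-1}$, valid for $1\le t\le r+1$ (and for $t>r+1$ both sides count subsets of size exceeding a component, hence vanish, which must be treated as a trivial base/edge case). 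The base cases $t=0$ (count $1=1$) and $t=1$ (count $n=n$) are immediate, and $t=2$ follows since the maximum-degree bound gives $e(G)\le \tfrac12 nr=e(aK_{r+1})$. Summing over $t$ yields $\k(G)\le\sum_t\k_t(aK_{r+1})=1+a(2^{r+1}-1)$.

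I expect the only real subtlety to be bookkeeping around the range of $t$: the clique-extension bound $r-t+2$ becomes zero at $t=r+2$ and negative beyond, so one should phrase the induction so that the estimate is only invoked when it is meaningful, and separately observe that $\K_t(G)=\0$ is impossible to contradict — i.e.\ the claimed inequality is trivially true once $t$ exceeds the size of the largest component on the right. There is also a mild wrinkle in that the double-counting step divides by $t$, so I want $t\ge 1$; combined with wanting $K\in\K_{t-1}$ nonempty this is fine for $t\ge 2$. Everything else is a routine transcription of the independent-set argument with inequalities reversed. No stability/uniqueness claim is made in this weak theorem, so there is nothing to prove there.
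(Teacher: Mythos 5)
Your proposal is correct and is essentially the paper's own argument: the same extension bound $\card\setof{D\in\K_t(G)}{D\supseteq C}\le r-t+2$ (each vertex of $C$ has at most $r-t+2$ neighbors outside $C$), the same double-counting identity, and the same induction on $t$ landing on $a\binom{r+1}{t}$. The edge-case bookkeeping you flag for large $t$ is handled implicitly in the paper and poses no difficulty.
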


\begin{proof}
	As in the proof of Theorem~\ref{thm:ind_signpost}, we start by proving that if $C\in \K_{t-1}(G)$, 
	\[
		\card\setof{D\in \K_t(G)}{D\supseteq C}\leq r-t+2.
	\]
	This is immediate since each vertex in $C$ has at most $r-t+2$ neighbors outside $C$.  Thus, by induction on $t$ (starting at $t=1$),
	\begin{align*}
		\k_t(G)&\leq \frac{r-t+2}{t}\,\k_{t-1}(G)\\
		&\leq \frac{r-t+2}{t}\,\k_{t-1}(aK_{r+1})\\
		&= \frac{r-t+2}{t}\,a\binom{r+1}{t-1}\\
		&= a\binom{r+1}t=\k_t(aK_{r+1}).\qedhere
	\end{align*} 
\end{proof}

In the remaining sections of the paper, we prove a best possible bound for all $n$ on the number of complete subgraphs of a graph with $\Delta(G)\leq r$.


\section{Outline of the proof} 
\label{sec:outline}

Our approach to the proof of the Main Theorem is as follows. We consider a graph $G$ on $n$ vertices with $\D(G)\le r$.  If $K_{r+1}\of G$ then we are done by induction.  Motivated by the proof of Theorem~\ref{thm:k_signpost}, we will assign weights to the complete subgraphs of $G$: if $C\in \K(G)$ then we set
\[
	\w(C) = \abs[\Big]{\intersection_{x\in C} N(x)},
\]
the number of common neighbors of all the vertices in $C$.  (In particular of course no element of $C$ is counted since it is not adjacent to itself.)  Equivalently, $\w(C)$ is the number of cliques of $G$ of size $|C|+1$ containing $C$.  By the same double-counting argument as in Theorem~\ref{thm:ind_signpost} we have that
\begin{equation}\label{eq:kbound}
	\k_{t}(G) = \frac1{t} \sum_{C\in \K_{t-1}(G)} \w(C).
\end{equation}
Thus if the average weight of $(t-1)$-cliques is small then there will not be many $t$-cliques in total. The bound on $\D(G)$ shows that $\w(C) \le r+1-\abs{C}$ for all $C$. If a clique $C$ satisfies this bound with equality then we call it \emph{tight}. The core of our proof is to focus on the tight cliques.  Suppose then that $G$ is a graph with $\Delta(G)\leq r$ and $n=a(r+1)+b$ vertices.  In crude outline our proof has four cases:
\begin{enumerate}[I)]
    \item $G$ contains a $K_{r+1}$, in which case we are done by induction.
    \item $G$ has no tight cliques. In this case, by (\ref{eq:kbound}), we observe that $G$ satisfies the \emph{strong inequalities}: for all $t\geq 3$ we have
    \[
        \k_{t}(G) \le \frac {r-t+1}{t} \;\k_{t-1}(G).
    \] 
	We will show that in all nontrivial cases, the strong inequalities imply that $\k(G)<\k(aK_{r+1}\cup K_b)$.
    \item $G$ has some tight clique for which a certain parameter, that we call \emph{fixed loss}, is small.  In this case, we modify the graph $G$ to obtain a graph $G'$ with $\k(G')>\k(G)$.
    \item $G$ has tight cliques each having large fixed loss.  We prove that $G$ satisfies the strong inequalities despite having tight cliques.\label{alligator}
\end{enumerate}

In the next section, we introduce and discuss fixed loss.  In subsequent sections, we consider the strong inequalities and use a discharging technique to deal with case \ref{alligator}.


\section{Fixed loss} 
\label{sec:FL}

The modification we hope to do to a graph containing a tight clique is relatively simple. It is described in the following definition.

\begin{definition}
	Suppose that $G$ is a graph with $\D(G)\le r$ and $T\of V(G)$ is a tight clique of size $t$. We let $S_T = \intersection_{x\in T} N(x)$ and define a new graph by converting $T\cup S_T$ into a clique (of size $r+1$) and deleting all the edges $[S_T,V(G)\wo(T\cup S)]$. In other words we define
	\[
		\Gp{T} = G + \binom{S_T}{2} - [S_T,V(G)\wo(T\cup S_T)],
	\]
where $\binom{S_T}2$ is the set of all pairs in $S_T$ and, for sets $U$ and $V$, $[U,V]=\setof{uv}{u\in U, v\in V}$.
\end{definition}

\begin{lemma}\label{lem:dR}
	With the notation of the previous definition, for all $x\in S$ we have
	\[
		\abs{N_G(x)\wo (T\cup S)} \le d_{R_T}(x)
	\]
\end{lemma}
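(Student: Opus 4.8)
The plan is a short counting argument using only the definition of $S_T$ and the hypothesis $\Delta(G)\le r$; I do not expect any genuine obstacle, since this is essentially a warm-up lemma whose only role is to certify that the modification $G\mapsto\Gp{T}$ does not ``over-delete'' at the vertices of $S_T$. I would begin by recording the relevant structural fact about $\Gp{T}$: for $x\in S_T$, since $\Gp{T}$ is built by adding all of $\binom{S_T}{2}$ and deleting every edge from $S_T$ to $V(G)\wo(T\cup S_T)$, the neighbourhood of $x$ in $\Gp{T}$ is exactly $(T\cup S_T)\wo\{x\}$; hence in the residual graph $R_T$ (i.e.\ $\Gp{T}$ with the vertices of $T$ deleted) the vertex $x$ is adjacent precisely to $S_T\wo\{x\}$, and, using that $T$ is \emph{tight} so that $\abs{S_T}=r+1-\abs{T}$, we get $d_{R_T}(x)=\abs{S_T}-1=r-t$.

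For the left-hand side I would split $N_G(x)$ along the partition $V(G)=T\sqcup S_T\sqcup\bigl(V(G)\wo(T\cup S_T)\bigr)$ (a genuine partition of $V(G)$, since $T\cap S_T=\0$, and $x\notin N_G(x)$). Because $x\in S_T=\intersection_{v\in T}N_G(v)$, every vertex of $T$ is a neighbour of $x$, so $\abs{N_G(x)\cap T}=\abs{T}=t$ and therefore
\[
\abs{N_G(x)\wo(T\cup S_T)}=d_G(x)-t-\abs{N_G(x)\cap S_T}\le d_G(x)-t\le r-t,
\]
the final inequality being the hypothesis $\Delta(G)\le r$. Comparing this with the value of $d_{R_T}(x)$ computed above yields $\abs{N_G(x)\wo(T\cup S_T)}\le r-t=d_{R_T}(x)$, which is the claim.

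The one thing to watch is the bookkeeping: tightness of $T$ is used exactly once (to know $\abs{S_T}=r+1-t$, and hence that $d_{R_T}(x)$ equals $r-t$ rather than something smaller) and $\Delta(G)\le r$ exactly once (to bound $d_G(x)$); the slack in the inequality is precisely $\abs{N_G(x)\cap S_T}+\bigl(r-d_G(x)\bigr)$, i.e.\ the $S_T$-internal neighbours of $x$ already present in $G$ together with any degree deficiency at $x$. I would also note that the same degree count, applied to a vertex $v\in T$ in place of $x\in S_T$ (it is adjacent to all of $(T\wo\{v\})\cup S_T$, which already has $r$ elements), forces $N_G(v)=(T\cup S_T)\wo\{v\}$ — the structural reason that $T\cup S_T$ really becomes a clique component of order $r+1$ in $\Gp{T}$, so that deleting $T$ from $\Gp{T}$ leaves exactly the picture used above.
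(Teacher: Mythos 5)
There is a genuine gap, and it comes from misreading what $R_T$ is. In the paper $R_T$ is defined as $\complement{G[S_T]}$, the graph on $S_T$ whose edges are exactly the pairs \emph{missing} from $G$; it is not the graph induced on $S_T$ by $\Gp{T}$ (in which $S_T$ has been filled in to a clique), as you assume when you compute $d_{R_T}(x)=\abs{S_T}-1$. Under the actual definition, $d_{R_T}(x)$ counts the $G$-non-neighbors of $x$ inside $S_T$, so $d_{R_T}(x)=\abs{S_T}-1-\abs{N_G(x)\cap S_T}=r-t-\abs{N_G(x)\cap S_T}$, which is in general strictly smaller than $r-t$. Your chain of inequalities discards precisely the term $\abs{N_G(x)\cap S_T}$ and therefore establishes only the weaker statement $\abs{N_G(x)\wo(T\cup S_T)}\le r-t$, which is not the lemma. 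The dependence on $d_{R_T}(x)$ is exactly what matters downstream: in the proof of Lemma~\ref{lem:kGS} each $x\in I\of S_T$ must have at most $d_{R_T}(x)$ neighbors in $V(G)\wo(T\cup S_T)$, so that the cliques lost in passing to $\Gp{T}$ can be counted by $2^{\d_I}-1$ with $\d_I=\min_{x\in I}d_{R_T}(x)$; replacing $d_{R_T}(x)$ by the crude bound $r-t$ would break the fixed-loss accounting.

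The repair is immediate (which is why the paper's proof is one word): simply do not throw the term away. Since $x\in S_T=\intersection_{v\in T}N_G(v)$ is adjacent in $G$ to all $t$ vertices of $T$, we have $d_G(x)=t+\abs{N_G(x)\cap S_T}+\abs{N_G(x)\wo(T\cup S_T)}\le r$, and using tightness, $\abs{S_T}=r+1-t$, this rearranges to $\abs{N_G(x)\wo(T\cup S_T)}\le r-t-\abs{N_G(x)\cap S_T}=d_{R_T}(x)$, as required. (Your side remark that every $v\in T$ satisfies $N_G(v)=(T\cup S_T)\wo\set{v}$, so that $T\cup S_T$ becomes a clique component of $\Gp{T}$, is correct but not needed for this lemma.)
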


\begin{proof}
	Immediate.
\end{proof}

If $G$, $T$ and $S$ are as in the definition, then there are no edges in $G$ between $T$ and $V(G)\wo (T\cup S)$. Thus $\Gp{T}$ contains a copy of $K_{r+1}$ on $T\cup S$. By Lemma~\ref{lem:dR}, we have $\Delta(\Gp{T}) \le r$. Vertices in $T\cup S$ now have degree exactly $r$ and no other vertex has had its degree increased. We will give a bound on $\k(\Gp{T})$ that is described in terms of the edges inside $S$ that we are ``filling in''.

\begin{definition}
	Let $G$ be a graph with $\D(G)\le r$ and $T\of V(G)$ a tight clique in $G$. Set $S = S_T$. Then we define
	\[
		R_T = \complement{G[S]},
	\]
i.e., the graph on $S$ whose edges are those not in $G$.  If $R$ is any graph and $I\of V(R)$ we define
	\[
		\d_I = \min\setof{d_R(x)}{x\in I}.
	\]
	We define the \emph{fixed loss of a graph $R$} to be
	\[
		\fl(R) = \sum_{\substack{I\in \Ind(R)\\I\not=\0}} (2^{\d_I} - 1).
	\]
\end{definition}

The following lemma gives a lower bound on $\k(G_T)$ in terms of $\fl(R)$.

\begin{lemma}\label{lem:kGS}
	If $G$ is a graph with $\D(G)\le r$ and $T\of V(G)$ is a tight $t$-clique then
	\[
		\k(G_T) \ge \k(G) + 2^{r+1} - 2^t\,\ind(R_T) - \fl(R_T).
	\]
\end{lemma}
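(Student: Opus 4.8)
We need to show that modifying $G$ to $G_T$ doesn't lose too many cliques: precisely
$$\k(G_T) \ge \k(G) + 2^{r+1} - 2^t\,\ind(R_T) - \fl(R_T).$$\textbf{Setup.} Write $S = S_T$, $R = R_T = \complement{G[S]}$, $t = \abs{T}$, and recall that $G_T$ has a clique $K_{r+1}$ on $T \cup S$, while the only edges removed are those in $[S, V(G) \wo (T \cup S)]$, and the only edges added are those in $\binom{S}{2}$ (i.e., the edges of $R$). The plan is to compare $\K(G)$ and $\K(G_T)$ by sorting cliques according to how they meet the set $S$. A clique $C$ of $G$ (or of $G_T$) that is disjoint from $S$ is unaffected by the modification, since no edges outside of those touching $S$ were changed; so those cliques contribute equally to both sides and can be ignored. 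Thus it suffices to track cliques meeting $S$, and to exhibit an injection-with-surplus from the old ones (meeting $S$) into the new ones.

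\textbf{The main bookkeeping.} First I would count the cliques of $G_T$ that meet $S$. Since $T \cup S$ is a clique of size $r+1$ in $G_T$ and (by the edge-deletion) no vertex of $S$ has any $G_T$-neighbor outside $T \cup S$, \emph{every} clique of $G_T$ meeting $S$ lies inside $T \cup S$. The cliques inside $T \cup S$ that meet $S$ number exactly $2^{r+1} - 2^{t}$ (all subsets of $T \cup S$ minus those contained in $T$, minus the empty set — being careful with the empty-set convention, which the $-1$'s in $\fl$ and the constants here are calibrated to handle). So $G_T$ \emph{gains} a clean block of $2^{r+1} - 2^t$ cliques meeting $S$. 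Next I would bound \emph{above} the number of cliques of $G$ meeting $S$, i.e., show this is at most $2^{t}\,\ind(R_T) + \fl(R_T)$; subtracting gives exactly the stated inequality once the disjoint-from-$S$ cliques are cancelled.

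\textbf{Bounding the old cliques meeting $S$.} A clique $C$ of $G$ with $C \cap S \neq \0$ splits as $C = A \cup I \cup B$ where $A = C \cap T$, $I = C \cap S \neq \0$, and $B = C \wo (T \cup S)$. Because $G[S]$ is the complement of $R$, the set $I$ must be independent in $R$: so $I \in \Ind(R) \wo \{\0\}$. Having fixed such an $I$, the vertices of $B$ must be common $G$-neighbors of all of $I$ inside $V(G)\wo(T\cup S)$; by Lemma~\ref{lem:dR} each $x \in I$ has at most $d_{R}(x)$ such neighbors, and $B$ lies in the common neighborhood, so $\abs{B} \le \abs{\,\bigcap_{x\in I} N_G(x)\wo(T\cup S)\,} \le \min_{x\in I} \abs{N_G(x)\wo(T\cup S)} \le \d_I$ (the min of the degrees), and hence there are at most $2^{\d_I}$ choices of $B$ — actually I would argue more carefully that the number of valid $B$'s is at most $2^{\d_I}$ by picking the vertex $x_0 \in I$ achieving the minimum in $\d_I$ and noting $B \of N_G(x_0)\wo(T\cup S)$. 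Meanwhile $A$ ranges freely over subsets of $T$ (since $T \cup S$ induces enough adjacency: every vertex of $T$ is adjacent in $G$ to every vertex of $S$ because $S = \bigcap_{x\in T}N(x)$, and $T$ is a clique), giving $2^t$ choices. So the number of cliques $C$ of $G$ with $C \cap S = I$ is at most $2^{t}\cdot 2^{\d_I}$. Summing over $I \in \Ind(R)\wo\{\0\}$:
\[
  \card\setof{C \in \K(G)}{C \cap S \neq \0} \;\le\; \sum_{\substack{I \in \Ind(R)\\ I \neq \0}} 2^{t}\,2^{\d_I} \;=\; 2^{t}\!\!\sum_{\substack{I \in \Ind(R)\\ I \neq \0}}\!\!\bigl(2^{\d_I}-1\bigr) \;+\; 2^{t}\!\!\sum_{\substack{I \in \Ind(R)\\ I \neq \0}}\!\! 1 \;=\; 2^{t}\,\fl(R) + 2^{t}\bigl(\ind(R) - 1\bigr).
\]
Hmm — this gives $2^t\ind(R) + 2^t\fl(R) - 2^t$, which is slightly \emph{stronger} than what we need against the claimed $2^t\ind(R_T) + \fl(R_T)$ only when $\fl(R) \ge 0$ with a factor $2^t \ge 1$; so in fact the crude bound already suffices (the theorem is stated with a bit of slack). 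I would then double-check the empty-clique bookkeeping so that the $+1$ for $C = \0$ (which survives into $G_T$ too) cancels correctly on both sides.

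\textbf{Expected obstacle.} The delicate point is \emph{not} the counting itself but making sure the two sides are compared over exactly the same "unchanged" part, and that the empty set / constant-term bookkeeping (the various $-1$'s) lines up — it is easy to be off by the $+1$ from the empty clique or by whether $\0$ is counted in $\Ind(R)$. A secondary subtlety is justifying that $A = C\cap T$ is genuinely free: one must use that $S \of N(x)$ for every $x \in T$, so any vertex of $I\of S$ is adjacent in $G$ to every vertex of $T$, hence appending any subset of $T$ to a valid $A\cup I \cup B$ keeps it a clique — this is where the definition $S_T = \bigcap_{x\in T} N(x)$ is essential. Once those conventions are pinned down, the inequality falls out of the display above.
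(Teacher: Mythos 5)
Your overall decomposition (cancel the cliques disjoint from $S$, count the new cliques meeting $S$ exactly, and bound the old ones according to $I=C\cap S$) has the right shape, but your final bound does not prove the lemma: what you obtain is $\k(G_T)\ge \k(G)+2^{r+1}-2^t\ind(R)-2^t\fl(R)$, and the factor $2^t$ on $\fl(R)$ makes this \emph{weaker} than the claimed inequality, not stronger. Your remark that $2^t\ind(R)+2^t\fl(R)-2^t$ is ``slightly stronger'' than the needed $2^t\ind(R)+\fl(R)$ has the inequality backwards: in the relevant regime ($t\ge 1$, $\fl(R)\ge 2$) one has $2^t\fl(R)-2^t\ge \fl(R)$, so your upper bound on the old cliques meeting $S$ is too large. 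The lost precision is not harmless slack: Corollary~\ref{cor:fli} and Lemma~\ref{lem:sbound} rely on the coefficient of $\fl(R_T)$ being $1$.

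The missing idea is a structural consequence of tightness which the paper uses explicitly: since $T$ is tight, each $x\in T$ has $t-1$ neighbors in $T$ and $\abs{S}=r+1-t$ neighbors in $S$, so $d(x)=r$ and $N[x]=T\cup S$; hence $G$ has no edges between $T$ and $V'=V(G)\wo(T\cup S)$. Consequently, in your split $C=A\cup I\cup B$ the choice of $A\of T$ is \emph{not} free when $B\neq\0$: a $G$-clique meeting $V'$ must avoid $T$ entirely, so $A=\0$ there. Splitting the old cliques meeting $S$ accordingly, those with $B=\0$ lie in $T\cup S$ and number exactly $2^t(\ind(R)-1)$, while those with $B\neq\0$ satisfy $C=I\cup B$ with $\0\neq B\of N_G(x_0)\cap V'$ for $x_0\in I$ of minimum degree in $R$, giving at most $2^{\d_I}-1$ per nonempty independent $I$ and at most $\fl(R)$ in total; this second count is precisely the paper's bound on $\abs{\K(G)\wo\K(G_T)}$. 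Then $(2^{r+1}-2^t)-\bigl(2^t(\ind(R)-1)+\fl(R)\bigr)=2^{r+1}-2^t\ind(R)-\fl(R)$, which is the lemma. With this correction your argument coincides in substance with the paper's proof, which compares $\K(G)\wo\K(G_T)$ and $\K(G_T)\wo\K(G)$ directly.
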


\begin{proof}
	For convenience we will set $S = S_T$ and $V'=V(G)\wo (T\cup S)$. We also abbreviate $R_T$ to $R$. We will show that 
	\begin{align*}
		\abs{\K(G)\wo \K(G_T)} &\le \fl(R) \text{ and} \\
		\abs{\K(G_T)\wo \K(G)} &= 2^{r+1} - 2^t\,(\ind(R)-s-1),
	\end{align*}
	from which the result follows immediately. Consider first a clique $C\in \K(G)\wo \K(G_T)$. It must meet both $S$ and $V'$. We will count such cliques according to their intersection with $S$, so set $I=C\cap S$. This intersection must be an independent set in $R$ (since edges of $R$ are missing in $G$) and (at a bare minimum) the elements of $C\cap V'$ must be common neighbors of all the elements of $I$. Since, by Lemma~\ref{lem:dR}, each $x\in I$ has at most $d_{R}(x)$ neighbors in $V'$, we can fix some $x_0\in I$ with $d_{R}(x_0)=\d_I$ and we see that each such $C$ is associated with a unique non-empty subset of $N_G(x_0)\cap V'$. Thus there are at most $2^{\d_I}-1$ such $C$, and at most $\fl(R)$ cliques in $\K(G)\wo \K(G_T)$ in total.
	
	Turning now to $\K(G_T)\wo \K(G)$ we see that if $C\in \K(G_T)\wo \K(G)$ then we must have $C\of T\cup S$ with $C\cap S\neq \0$. All such subsets are cliques of $G_T$. The ones that are cliques of $G$ are those not missing an edge in $S$, i.e., those that meet $S$ in an independent set of $R$ of size at least two. 
\end{proof}

\begin{corollary}\label{cor:fli}
	With the setup of Lemma~\ref{lem:kGS} and setting $s=\abs{S}$, if 
	\[
		2^t > \frac{\fl(R)}{2^s-\ind(R)+s+1}
	\]
	then $\k(G_T)>\k(G)$.
\end{corollary}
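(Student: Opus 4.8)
The plan is to deduce Corollary~\ref{cor:fli} directly from Lemma~\ref{lem:kGS} by a short algebraic rearrangement; the only additional input is the definition of a tight clique. Write $R=R_T$ and $S=S_T$ as in that lemma. Since $T$ is a tight clique of size $t$, its weight is $\w(T)=r+1-t$, and $\w(T)$ equals $\abs{S}=s$. Hence $t+s=r+1$, and therefore $2^{r+1}=2^{t}\cdot 2^{s}$.

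Substituting this identity into the bound of Lemma~\ref{lem:kGS} and collecting the terms carrying the factor $2^{t}$, one finds that $\k(G_T)-\k(G)$ is bounded below by $2^{t}\bigl(2^{s}-\ind(R)+s+1\bigr)-\fl(R)$. It therefore suffices to show that this lower bound is strictly positive, that is, that $2^{t}\bigl(2^{s}-\ind(R)+s+1\bigr)>\fl(R)$; dividing through by $2^{s}-\ind(R)+s+1$ gives exactly the hypothesis of the corollary. This division is legitimate because $2^{s}-\ind(R)+s+1>0$: the independent sets of $R$ form a subfamily of the $2^{s}$ subsets of $V(R)$, so $\ind(R)\le 2^{s}$, whence $2^{s}-\ind(R)+s+1\ge s+1\ge 1$.

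There is essentially no obstacle here: all of the combinatorial content --- the estimates on $\abs{\K(G)\wo\K(G_T)}$ and $\abs{\K(G_T)\wo\K(G)}$ --- has already been carried out in the proof of Lemma~\ref{lem:kGS}, and the corollary simply repackages that bound in a more convenient form. The only steps demanding any care are invoking the tightness of $T$ to replace $2^{r+1}$ by $2^{t+s}$, and checking that the denominator is positive before clearing it.
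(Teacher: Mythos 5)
The reduction you propose has the right shape, but the central algebraic step fails. Lemma~\ref{lem:kGS} as stated gives
\[
	\k(G_T)-\k(G)\ \ge\ 2^{r+1}-2^t\,\ind(R)-\fl(R)\ =\ 2^t\bigl(2^s-\ind(R)\bigr)-\fl(R),
\]
where, as you correctly note, tightness of $T$ gives $s=\abs{S}=\w(T)=r+1-t$ and hence $2^{r+1}=2^t2^s$. No amount of ``collecting the terms carrying the factor $2^t$'' turns this into the lower bound $2^t\bigl(2^s-\ind(R)+s+1\bigr)-\fl(R)$ that you assert: the two quantities differ by exactly $2^t(s+1)$, and the corollary's hypothesis $2^t\bigl(2^s-\ind(R)+s+1\bigr)>\fl(R)$ is weaker than the condition $2^t\bigl(2^s-\ind(R)\bigr)>\fl(R)$ that would be needed to make the lemma's stated lower bound positive. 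So, as written, your argument only establishes the corollary with denominator $2^s-\ind(R)$, not the stated denominator $2^s-\ind(R)+s+1$. (Your verification that the denominator is positive, via $\ind(R)\le 2^s$, is fine but is not where the content lies.)

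The missing $2^t(s+1)$ cannot be produced by formal rearrangement of the lemma's statement; it has to come from the finer bookkeeping carried out inside the proof of Lemma~\ref{lem:kGS}, where the two set differences are handled separately: $\abs{\K(G)\wo\K(G_T)}\le\fl(R)$, while $\abs{\K(G_T)\wo\K(G)}$ is evaluated by classifying the subsets $C\of T\cup S$ according to $I=C\cap S$, and it is precisely the separate treatment of the subsets with $\abs{I}\le 1$ that produces the term $2^t(s+1)$ appearing in the displayed count $2^{r+1}-2^t\bigl(\ind(R)-s-1\bigr)$. A correct write-up of the corollary must therefore invoke those internal estimates (equivalently, a sharpened form of the lemma that retains the $2^t(s+1)$), rather than the lemma's stated inequality --- and if you go this route you should rederive that count of $\abs{\K(G_T)\wo\K(G)}$ yourself with care, since subsets of $T\cup S$ meeting $S$ in at most one vertex are cliques of $G$ as well as of $G_T$, and tracking exactly how they are accounted for is where the $s+1$ stands or falls. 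That verification is the real content of the corollary, and it is absent from your proposal.
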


We start our investigation of the graph parameter $\fl$ by proving some simple (if somewhat surprising) extremal results.

\begin{theorem}\label{thm:maxFL}
	If $R$ is a graph on $s$ vertices then
	\[
		\fl(R) \le \fl(K_s) = s(2^{s-1}-1).
	\]
\end{theorem}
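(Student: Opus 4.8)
The plan is to prove the inequality by exhibiting an explicit injection from a set counted by $\fl(R)$ into a set counted by $\fl(K_s)$; the same map also explains why $K_s$ is extremal. First I would evaluate the right-hand side: in $K_s$ the only nonempty independent sets are the singletons $\set{v}$, and $\d_{\set{v}} = d_{K_s}(v) = s-1$, so $\fl(K_s) = s\,(2^{s-1}-1)$. The quantity $s\,(2^{s-1}-1)$ is exactly the number of pairs $(v, A)$ with $v\in V(R)$ and $\0\neq A\of V(R)\wo\set{v}$, and that set will be the target of the injection.

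Next I would rewrite $\fl(R)$ as a count of pairs. For each nonempty $I\in\Ind(R)$, fix once and for all a vertex $x_I\in I$ with $d_R(x_I) = \d_I$ (ties broken arbitrarily, but the choice must depend on $I$ alone). Since $\abs{N(x_I)} = \d_I$, we have $2^{\d_I}-1 = \card\setof{B}{\0\neq B\of N(x_I)}$, hence
\[
	\fl(R) = \card\setof{(I, B)}{I\in\Ind(R)\wo\set{\0},\ \0\neq B\of N(x_I)}.
\]
Now define $\Phi(I, B) = \bigl(x_I,\, (I\wo\set{x_I})\cup B\bigr)$. Because $I$ is independent and $x_I\in I$, every vertex of $I\wo\set{x_I}$ is a non-neighbour of $x_I$, whereas $B\of N(x_I)$; so $I\wo\set{x_I}$ and $B$ are disjoint, and $\Phi(I,B)$ has the form $(v, A)$ with $v = x_I\in V(R)$ and $A = (I\wo\set{x_I})\cup B$ a subset of $V(R)\wo\set{v}$ which is nonempty since $B\neq\0$. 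Thus $\Phi$ maps into the target set.

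Finally I would check injectivity. Suppose $\Phi(I_1, B_1) = \Phi(I_2, B_2) = (v, A)$. Comparing first coordinates gives $x_{I_1} = x_{I_2} = v$. Comparing second coordinates, and using that $I_j\wo\set{v}$ avoids $N(v)$ while $B_j\of N(v)$, we recover $I_j\wo\set{v} = A\wo N(v)$ and $B_j = A\cap N(v)$, so $I_1 = \set{v}\cup(A\wo N(v)) = I_2$ and then $B_1 = B_2$. Hence $\Phi$ is injective and
\[
	\fl(R) \le \card\setof{(v,A)}{v\in V(R),\ \0\neq A\of V(R)\wo\set{v}} = s\,(2^{s-1}-1) = \fl(K_s).
\]
I do not anticipate a genuine obstacle here; the one point that needs care is that $x_I$ must be chosen as a function of $I$ alone, so that the recovery step (and hence injectivity) is legitimate, together with the observation that $B\neq\0$ is precisely what forces the image to lie among pairs with $A\neq\0$.
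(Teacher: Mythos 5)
Your argument is correct. You verify $\fl(K_s)=s(2^{s-1}-1)$, rewrite $\fl(R)$ as the number of pairs $(I,B)$ with $I$ a nonempty independent set and $\0\neq B\of N(x_I)$ for a canonically chosen minimum-degree vertex $x_I\in I$, and the map $(I,B)\mapsto\bigl(x_I,(I\wo\set{x_I})\cup B\bigr)$ is indeed a well-defined injection into the pairs $(v,A)$ with $\0\neq A\of V(R)\wo\set{v}$: the decisive point, which you identify, is that $I\wo\set{x_I}$ lies outside $N(x_I)$ while $B$ lies inside it, so the two pieces can be separated again, and the isolated-vertex case ($\d_I=0$) contributes zero pairs on both sides of the accounting.

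This is a genuinely different packaging from the paper's proof, though the combinatorial core is shared. The paper proves the stronger weighted inequality $\sum_{\0\neq I\in\Ind(R)}\abs{I}(2^{\d_I}-1)\le s(2^{s-1}-1)$ by charging each independent set to \emph{every} one of its vertices (using $2^{\d_I}\le 2^{d(x)}$ for all $x\in I$), swapping the order of summation, and bounding the number of independent sets through a fixed vertex $x$ by $2^{s-d(x)-1}$ via $I\wo\set{x}\of V(R)\wo N[x]$ — exactly the containment that makes your injection work. Your route charges each $I$ only once, to its minimum-degree vertex, and trades the chain of per-vertex estimates for an exact injection; this is cleaner, uses $2^{\d_I}-1=2^{d(x_I)}-1$ with equality rather than an upper bound, and makes transparent why $K_s$ is extremal (its singletons exhaust the entire target set of pairs). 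What the paper's version buys is the stronger $\abs{I}$-weighted statement and, more importantly, a per-vertex estimate that is reused almost verbatim in the proof of Theorem~\ref{thm:flell} (``the fifth step above follows as in the proof of Theorem~\ref{thm:maxFL}''), so if you adopted your proof in the paper you would want to note that the bound on the number of independent sets containing a given vertex is still available for that later argument.
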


\begin{proof}
	We will in fact prove something stronger, that 
	\[
		\sum_{\substack{I\in \Ind(R)\\I\not=\0}} \abs{I}\bigl(2^{\d_I}-1\bigr) \le s(2^{s-1}-1).
	\]
	We calculate as follows.
	\begin{align*}
		\sum_{\substack{I\in \Ind(R)\\I\not=\0}} \abs{I}\bigl(2^{\d_I}-1\bigr) 
			&= \sum_{\substack{I\in \Ind(R)\\I\not=\0}} \sum_{x\in I} \bigl(2^{\d_I}-1\bigr) \\
			&\le \sum_{\substack{I\in \Ind(R)\\I\not=\0}} \sum_{x\in I} \bigl(2^{d(x)}-1\bigr) \\
			&= \sum_{x\in V(R)} \sum_{\substack{I \in \Ind(R)\\ x\in I}} \bigl(2^{d(x)}-1\bigr) \\
			&\le \sum_{x\in V(R)} 2^{s-d(x)-1} \bigl(2^{d(x)}-1\bigr) \\
			&=\sum_{x\in V(R)} (2^{s-1}-2^{s-d(x)-1})\\
			&\le s(2^{s-1}-1).
	\end{align*}
	In the antepenultimate step we used the fact that if $I$ is independent and contains $x$ then certainly $I\wo \set{x} \of V(R)\wo N[x]$. 
\end{proof}

We will also require a slightly more technical result bounding $\fl(R)$ in terms of both $s$ and the number of vertices of $R$ of degree one.  Before we do this, we need to start with a simple lemma showing that we can assume that $R$ contains no $K_2$ components.

\begin{lemma}\label{lem:k2}
	Suppose that $G$ is a graph with $\Delta(G)\leq r$ and $T$ is a tight clique in $G$ with $\abs{T}\geq 2$.  If $R=R_T$ contains a $K_2$ component on vertices $u$ and $v$, then the graph $G'$ obtained from $G$ by adding the edge $uv$ and deleting any edges in $[\set{u,v},V(G)\wo (T\cup S_T)]$ has $\Delta(G')\leq r$ and $\k(G')>\k(G)$.
\end{lemma}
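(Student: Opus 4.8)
The plan is to analyze the effect of the single modification described: take a $K_2$ component $\{u,v\}$ of $R=R_T$ (so in $G$ itself $u$ and $v$ are non-adjacent common neighbors of $T$, and moreover each of $u,v$ has no other common-neighbor relations missing inside $S$), add the edge $uv$, and delete the edges from $u$ and $v$ to $V(G)\wo(T\cup S_T)$. First I would check $\Delta(G')\le r$: the only vertices whose degree could rise are $u$ and $v$; since $\{u,v\}$ is a $K_2$ component of $R$, both $u$ and $v$ are adjacent in $G$ to every vertex of $(T\cup S_T)\wo\{u,v\}$ except each other, i.e.\ $d_G(u)=|T|+|S_T|-2+|N_G(u)\cap V'|$ where $V'=V(G)\wo(T\cup S_T)$; by Lemma~\ref{lem:dR} we have $|N_G(u)\cap V'|\le d_{R}(u)=1$, and adding $uv$ while deleting everything from $u$ into $V'$ leaves $d_{G'}(u)\le |T|+|S_T|-1\le r$ (using $|T\cup S_T|=r+1$ since $T$ is tight). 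Same for $v$; all other degrees are unchanged or decreased.

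Next I would count the change in cliques, which is a miniature version of the computation in Lemma~\ref{lem:kGS}. The cliques of $G$ destroyed are exactly those $C$ meeting $V'$ and containing $u$ or $v$ (but not both, since $uv\notin E(G)$); by Lemma~\ref{lem:dR} each such $C$ restricted to $V'$ lies inside $N_G(u)\cap V'$ or $N_G(v)\cap V'$, each a set of size $\le 1$, so the number destroyed is at most the number of cliques of $G$ that contain $u$ and one specified vertex of $V'$, plus the same for $v$ — at most $2\cdot 2^{|T|+|S_T|-2}\cdot 1$ if I bound crudely, but in fact I want a sharper tally. The cliques created are exactly the cliques of $G'$ containing the edge $uv$: these are $\{u,v\}\cup D$ where $D$ ranges over cliques of $G'[(T\cup S_T)\wo\{u,v\}]$ together with arbitrary-subset extension within the common neighborhood; the cleanest accounting is that $G'[T\cup S_T]$ is now a complete graph $K_{r+1}$, so new cliques through $uv$ number $2^{r-1}$, while destroyed cliques number at most $2\cdot 2^{t-1}$ where $t=|T|$ — wait, I should instead mirror the $\fl$ bookkeeping exactly: destroyed cliques are those of $G$ meeting $V'$ and using $u$ or $v$, and for each the intersection with $S_T$ is an independent set of $R$ containing $u$ (resp.\ $v$), which since $\{u,v\}$ is a component forces that intersection to be exactly $\{u\}$ (resp.\ $\{v\}$); hence the intersection with $T$ ranges over all $2^t$ subsets of $T$ and the $V'$-part over a nonempty subset of a $1$-set, giving at most $2\cdot 2^t\cdot 1 = 2^{t+1}$ destroyed cliques. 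The created cliques are all subsets of $T\cup S_T$ containing both $u$ and $v$, of which there are $2^{(r+1)-2}=2^{r-1}$.

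Therefore $\k(G')-\k(G)\ge 2^{r-1}-2^{t+1}$, and since $T\cup S_T$ has $r+1$ vertices with $|S_T|\ge 2$ (it contains $u$ and $v$) we get $t\le r-1$, hence $2^{t+1}\le 2^r$ — which is not yet enough, so the sharper count matters: I must also observe that a destroyed clique $C$ with $C\cap S_T=\{u\}$ and $C\cap V'\neq\0$ in fact needs $C\cap V'$ to be a clique of common neighbors of $u$ lying in $N_G(u)\cap V'$, a set of size $\le 1$, so there is at most one choice, but also $C\cap T$ can be \emph{any} subset of $T$; to beat $2^{r-1}$ I instead note destroyed cliques containing $u$ are in bijection with (subset of $T$) $\times$ (the $\le 1$ available $V'$-vertex, chosen nonempty), so at most $2^t$, and similarly $2^t$ for $v$, total $\le 2^{t+1}$, and I additionally discard the cliques $C\subseteq T\cup S_T$ containing $u$ but not $v$ that survive — the real inequality I want is the asymmetric one from Lemma~\ref{lem:kGS} specialized to this $R$, namely $\k(G')\ge \k(G)+2^{r+1}-2^t\ind(R)-\fl(R)$ is too lossy; instead, \textbf{the key step} is to run the Lemma~\ref{lem:kGS} argument \emph{only} on the component $\{u,v\}$: created cliques through $uv$ number $2^{r-1}$, destroyed cliques are those meeting $V'$ whose $S_T$-trace is $\{u\}$ or $\{v\}$, numbering at most $2^t\cdot(2^{\d_{\{u\}}}-1)+2^t\cdot(2^{\d_{\{v\}}}-1)=2^{t+1}(2^1-1)=2^{t+1}$, and since $t\le r-1$ with equality only if $|S_T|=2$, in which case $S_T=\{u,v\}$, $R=K_2$, $\ind(R)$-bookkeeping gives destroyed $\le 2^t\cdot 1 + 2^t\cdot 1=2^{t+1}=2^r$ but created $=2^{r-1}$, a contradiction — so I have the arithmetic backwards and must recount.

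\textbf{The main obstacle}, and where I would concentrate the actual work, is exactly this clique-counting arithmetic: getting a clean strict inequality $\k(G')>\k(G)$ requires counting destroyed cliques sharply (they must simultaneously use $u$ or $v$, meet $V'$, and — crucially — have $S_T$-trace equal to the singleton $\{u\}$ or $\{v\}$ because $\{u,v\}$ is an \emph{isolated} $K_2$ in $R$, which severely limits them) against created cliques (all of $2^{(r+1)-2}$ subsets of the now-complete $T\cup S_T$ that contain both $u$ and $v$). I expect the correct bound to be: destroyed $\le 2\cdot 2^{t-1}\cdot 1$ is wrong; rather destroyed cliques containing $u$ correspond to (arbitrary subset of $T$, which has $2^t$ choices) times (at most one vertex of $V'$), but a destroyed clique need not meet $V'$ only if it's not destroyed — so destroyed-through-$u$ $\le 2^t$ and likewise for $v$, total $\le 2^{t+1}$; meanwhile created $=2^{r-1}\ge 2^{t}$ when $t\le r-1$, with the factor-of-two gap closed by the observation that when $t=r-1$ we have $S_T=\{u,v\}$ so \emph{no} destroyed clique meets $V'$ at all through $u$ unless $d_R(u)=1$ with its unique $R$-neighbor being $v$ — consistent — but then $N_G(u)\cap V'$ could still be a single vertex; I will need the further refinement that the destroyed cliques through $u$ and through $v$ are counted by the \emph{same} subsets of $T$ only when... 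In short, I will carry out the Lemma~\ref{lem:kGS}-style two-sided count restricted to this component, verify $2^{r-1} > $ (sharp destroyed count) in every case $t\ge 2$, $|S_T|\ge 2$, and conclude; the degree check above is routine, and the strictness will come from the created/destroyed gap together with the fact (from $|T|\ge 2$ in the hypothesis) that $T\cup S_T$ genuinely grows into a $K_{r+1}$ with at least one new edge.
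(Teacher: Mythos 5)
Your degree verification is fine, but the heart of the lemma --- the strict inequality $\k(G')>\k(G)$ --- is never actually established: your write-up ends by conceding that the arithmetic has come out backwards and by listing the counts you would still need to carry out, so as it stands this is an outline with the decisive step missing. Moreover, the partial count rests on two incorrect structural claims. First, you allow a destroyed clique to contain an arbitrary subset of $T$ (this is where your factor $2^t$ on the destroyed side comes from). In fact, since $T$ is tight and $\Delta(G)\le r$, every vertex of $T$ has all $r$ of its neighbours inside $T\cup S_T$, so there are no edges between $T$ and $V'=V(G)\wo(T\cup S_T)$ (this is noted in the paper right after Lemma~\ref{lem:dR}); a clique destroyed by the modification must contain a vertex of $V'$, hence contains \emph{no} vertex of $T$ at all. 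This misplaced $2^t$ is exactly why your comparison of $2^{r-1}$ against $2^{t+1}$ breaks down at $t=r-1$. Second, your claim that the $S_T$-trace of a destroyed clique is forced to be exactly $\set{u}$ or $\set{v}$ because $\set{u,v}$ is a $K_2$ component of $R$ is false: every $y\in S_T\wo\set{u,v}$ is a non-neighbour of $u$ in $R$ and hence adjacent to $u$ in $G$, so a destroyed clique may perfectly well contain such vertices $y$ alongside $u$ and $u$'s unique $V'$-neighbour.

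For comparison, the paper's proof is a short direct count in the opposite direction: by Lemma~\ref{lem:dR} each of $u$ and $v$ lies on at most one deleted edge, so very little is lost, while $G'$ gains at least the edge $uv$ and the $t\ge 2$ triangles $\set{x,u,v}$ with $x\in T$. If you want the fully explicit bookkeeping along the lines you were attempting, the clean version is this: every lost clique contains the fixed pair $\set{u,w_u}$ or $\set{v,w_v}$ (where $w_u$, $w_v$ denote the unique $V'$-neighbours of $u$, $v$, if they exist) and otherwise lies inside $S_T\wo\set{u,v}$, since its vertices must be common neighbours of that pair and no vertex of $T$, nor the other of $u,v$, qualifies; hence at most $2\,\k\bigl(G[S_T\wo\set{u,v}]\bigr)$ cliques are lost. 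The gained cliques are exactly those of $G'$ through the new edge $uv$, namely $\set{u,v}\cup D_T\cup D_S$ with $D_T\subseteq T$ arbitrary and $D_S$ a clique of $G[S_T\wo\set{u,v}]$, so exactly $2^{t}\,\k\bigl(G[S_T\wo\set{u,v}]\bigr)$ cliques are gained. Since $t\ge 2$, the gain strictly exceeds the loss in every case, with no case analysis on $t$ versus $r$: the factor $2^t$ belongs on the created side, not the destroyed side, and once the ``no edges between $T$ and $V'$'' observation is in place the inequality closes immediately.
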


\begin{proof}
	Most cliques are the same in $G$ and $G'$.  In $G'$ we no longer have the $K_2$s corresponding to edges in $[\set{u,v},V(G)\wo (T\cup S_T)]$; since each of $u$ and $v$ is incident to at most one such edge (by Lemma~\ref{lem:dR}), we have lost at most two cliques.  On the other hand, we have gained the edge $uv$ and $t\geq 2$ triangles of the form $\set{x,u,v}$ with $x\in T$.
\end{proof}

\begin{theorem}\label{thm:flell}
	Let $R$ be a graph on $s$ vertices having $\ell$ vertices of degree one and containing neither a $K_1$ nor a $K_2$ component.  Then
	\[
		\fl(R)\leq 2^{s}+(s-\ell-2)2^{s-\ell-1}.
	\]
\end{theorem}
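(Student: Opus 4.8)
The plan is to reorganize $\fl(R)$ as a sum over degree thresholds and then estimate each term. Writing $V_{\ge j}=\setof{x\in V(R)}{d_R(x)\ge j}$ and using $2^{\d_I}-1=\sum_{j=1}^{\d_I}2^{j-1}$, one obtains
\[
\fl(R)=\sum_{\0\neq I\in\Ind(R)}\;\sum_{j=1}^{\d_I}2^{j-1}=\sum_{j\ge 1}2^{j-1}\bigl(\ind(R[V_{\ge j}])-1\bigr),
\]
since $\d_I\ge j$ is equivalent to $I\of V_{\ge j}$. Because $R$ has no $K_1$ component, $V_{\ge 1}=V(R)$, so the $j=1$ term is $\ind(R)-1$; because $R$ has no $K_2$ component, the set $L$ of degree-one vertices is independent and each of its vertices has its neighbour in $W:=V(R)\wo L$, so for $j\ge 2$ we have $V_{\ge j}\of W$, a set of size $s-\ell$.

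Next I would assemble the tools. (i) Among graphs on $m$ vertices with no $K_1$ and no $K_2$ component, the number of independent sets is at most $2^{m-1}+1$; this reduces, by multiplicativity over the components (each of which has at least three vertices), to the classical fact that a star maximises the number of independent sets among connected graphs. In particular $\ind(R)-1\le 2^{s-1}$, and if $R[W]$ also has no $K_1$ or $K_2$ component then $\ind(R[W])-1\le 2^{s-\ell-1}$. (ii) For $x\in W$ we have $d_R(x)=d_{R[W]}(x)+(\text{leaves at }x)\le (s-\ell-1)+(\text{leaves at }x)$; since the $\ell$ leaves are distributed among the $s-\ell$ vertices of $W$, this gives $\abs{V_{\ge j}}\le \ell/\bigl(j-(s-\ell)+1\bigr)$ for $j\ge s-\ell$, so $V_{\ge j}=\0$ once $j\ge s$. (iii) The case $\ell=0$ is immediate: then $\d(R)\ge 2$ and Theorem~\ref{thm:maxFL} yields $\fl(R)\le s(2^{s-1}-1)<s\,2^{s-1}=2^s+(s-2)2^{s-1}$.

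Assume now $\ell\ge 1$. For the $j\ge 2$ part of the sum each $R[V_{\ge j}]$ is an induced subgraph of $R[W]$, so $\ind(R[V_{\ge j}])-1\le 2^{\abs{V_{\ge j}}}-1$, with the sharper $2^{\abs{V_{\ge j}}-1}$ available for the initial values of $j$, for which $R[V_{\ge j}]$ still has no $K_1$ or $K_2$ component. One then feeds in the fact that $\abs{V_{\ge 2}}=s-\ell\ge\abs{V_{\ge 3}}\ge\cdots$, the decay estimate $\abs{V_{\ge j}}\le\ell/\bigl(j-(s-\ell)+1\bigr)$ for $j\ge s-\ell$, and — crucially — that $\ind(R)$ is small exactly when $R[W]$ is dense, so the $j=1$ term and the large-$j$ terms cannot both be large. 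It is cleanest to first dispose of the degenerate case in which every vertex of $W$ has $R$-degree exactly $2$: then $R$ is a disjoint union of cycles and of paths on at least three vertices, and the closed forms $\fl(P_m)=\ind(P_m)+2\ind(P_{m-2})-3$ and $\fl(C_m)=3(\ind(C_m)-1)$ together with $\ind(P_m)\le\frac58\cdot 2^m$ and $\ind(C_m)\le\frac12\cdot 2^m$ make $\fl(R)$ exponentially smaller than $2^s$. In the remaining case $W$ contains a vertex of $R$-degree at least $3$, and the threshold estimates reduce the claim to a finite arithmetic inequality in $s-\ell$, $\ell$, and the few relevant values $\abs{V_{\ge j}}$, with a modest amount of case-checking for small $s-\ell$.

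The main obstacle is precisely this balancing act. A term-by-term argument that bounds $\ind(R)-1$ by its worst case $2^{s-1}$ and the $j\ge 2$ part by its worst case, independently, overshoots $2^s+(s-\ell-2)2^{s-\ell-1}$: the first bound is tight only for star-like $R$, the second only for $R$ with many high-degree vertices, and the two cannot be tight simultaneously. The real work is to quantify this trade-off — carrying the dependence of $\ind(R)$ on the edge count and degree sequence of $R[W]$ through the computation rather than discarding it up front — and to verify that, case by case, the combined estimate never exceeds the claimed bound.
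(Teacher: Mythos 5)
Your opening identity is correct and potentially useful: writing $2^{\d_I}-1=\sum_{j=1}^{\d_I}2^{j-1}$ and noting that $\d_I\ge j$ exactly when $I\of V_{\ge j}$ does give $\fl(R)=\sum_{j\ge 1}2^{j-1}\bigl(\ind(R[V_{\ge j}])-1\bigr)$, and the observations that $V_{\ge 2}\of W=V(R)\wo L$ and that $\ind(R)\le 2^{s-1}+1$ are fine. But the proof stops exactly where the theorem begins. As you yourself concede in the final paragraph, bounding the $j=1$ term by $2^{s-1}$ and the $j\ge 2$ terms by their individual worst cases overshoots the target $2^{s}+(s-\ell-2)2^{s-\ell-1}$ (indeed badly: $\ind(R[V_{\ge j}])-1\le 2^{\abs{V_{\ge j}}}-1$ summed against the geometric weights $2^{j-1}$ can reach order $2^{s-1+s-\ell}$), so everything hinges on a quantitative trade-off between the size of $\ind(R)$ and the presence of high-degree vertices. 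That trade-off is never formulated as a lemma, let alone proved; ``$\ind(R)$ is small exactly when $R[W]$ is dense'' is a heuristic, and ``reduce the claim to a finite arithmetic inequality \dots{} with a modest amount of case-checking'' is not an argument, especially since the claimed inequality involves the unbounded parameters $s-\ell$, $\ell$, and the profile $\abs{V_{\ge j}}$. The side claims you lean on (the closed forms for $\fl(P_m)$ and $\fl(C_m)$, the decay estimate $\abs{V_{\ge j}}\le \ell/(j-(s-\ell)+1)$) are plausible but also unverified, and in any case they only handle peripheral configurations.

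The missing ingredient is a bound that couples, for each vertex, the weight factor with the count of independent sets through it. The paper's proof does precisely this: it splits $\fl(R)$ according to whether $I$ meets $L$ (for such $I$ one has $2^{\d_I}-1=1$, and each nonempty $L'\of L$ forces a neighbour in $W$ out of $I$, giving at most $(2^{\ell}-1)2^{s-\ell-1}$), and for $I$ disjoint from $L$ it passes to $\sum_I\abs{I}(2^{\d_I}-1)$, swaps the order of summation, and uses that the number of independent sets containing $v$ and avoiding $L$ is at most $2^{s-\ell-d(v)+d_{\ell}(v)-1}$, where $d_{\ell}(v)=\abs{N(v)\cap L}$. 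This single estimate is the quantitative form of your ``balancing act'': a large $d(v)$ inflates $2^{d(v)}-1$ but deflates the count of independent sets through $v$ by the same exponential factor, and a convexity argument in $d_{\ell}(v)$ then yields exactly $2^{s}+(s-\ell-2)2^{s-\ell-1}$. Without an analogue of this step (some inequality relating $\ind(R[V_{\ge j}])$ to the degrees forcing membership in $V_{\ge j}$), your threshold decomposition does not close, so as it stands the proposal is an outline with the central estimate missing rather than a proof.
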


\begin{proof}
	Let $L$ be the set of vertices of degree one.  We split up the sum computing $\fl(R)$ into two parts, the contributions of independent sets containing an element of $L$ and the rest.  To this end, let
	\begin{align*}
		\fl'(R)&=\sum_{\substack{I\in \Ind(R)\\I\cap L\neq \0}} 2^{\d_I}-1=\card\setof{I\in \Ind(R)}{I\cap L\neq \0},\quad\text{and}\\
		\fl''(R)&=\sum_{\substack{\0\neq I\in \Ind(R)\\I\cap L=\0}} 2^{\d_I}-1.
	\end{align*}
	To bound the first term, we observe
	\[
		\fl'(R)=\card\setof{I\in \Ind(R)}{I\cap L\neq \0}\leq (2^{\ell}-1)2^{s-\ell-1}.
	\]
	This follows from the fact that no vertex of $L$ is adjacent to any other and therefore, given any nonempty subset $L'$ of $L$, at least one vertex of $R\wo L$ is excluded from $I$.  So there are at most $2^{s-\ell-1}$ independent sets contributing to $\fl'(R)$ of the form $L'\cup J$ where $L\cap J=\0$.  On the other hand, writing $d_{\ell}(v)$ for $\abs{N(v)\cap L}$,
	\begin{align*}
		\fl''(R)&=\sum_{\substack{\0\neq I\in \Ind(R)\\I\cap L=\0}} 2^{\d_I}-1\\
		&\leq \sum_{\substack{\0\neq I\in \Ind(R)\\I\cap L=\0}} \abs{I}(2^{\d_I}-1)\\
		&= \sum_{v\in V(R)}\; \sum_{\substack{v\in I\in \Ind(R)\\I\cap L=\0}} 2^{\d_I}-1\\
		&\leq \sum_{v\in V(R)}\; \sum_{\substack{v\in I\in \Ind(R)\\I\cap L=\0}} 2^{d(v)}-1\\
		&\leq \sum_{v\in V(R)} 2^{s-\ell-d(v)+d_{\ell}(v)-1}(2^{d(v)}-1)\\
		&= \sum_{v\in V(R)} 2^{s-\ell+d_{\ell}(v)-1}-2^{s-\ell-d(v)+d_{\ell}(v)-1}\\
		&= 2^{s-\ell-1}\left(\sum_{v\in V(R)} 2^{d_{\ell}(v)}-\sum_{v\in V(R)}2^{d_{\ell}(v)-d(v)}\right)\\
		&\leq 2^{s-\ell-1}(2^{\ell}+s-\ell-1).
	\end{align*}
The fifth step above follows as in the proof of Theorem~\ref{thm:maxFL} and the final step uses the convexity of $2^x$ on the first term and ignores the second.

Combining these bounds, we have
\[
	\fl(R)=\fl'(R)+\fl''(R)\leq 2^s+(s-\ell-2)2^{s-\ell-1}.\qedhere
\]
\end{proof}


\section{The strong inequalities} 
\label{sec:strong}

As noted in Section~\ref{sec:outline}, if there are no tight cliques of size at least two, then $G$ satisfies the strong inequalities: for all $t\geq 3$,
\begin{equation}
	\k_t(G)\leq \frac{r-t+1}t \k_{t-1}(G).\label{eqn:strong}
\end{equation}
Note that cliques of size one are tight exactly if the vertex has degree $r$.  Our bound on $k_2(G)=e(G)$ will be the obvious one arising from the degree bound.  The next lemma summarizes these inequalities into a bound on $k(G)$.

\begin{lemma}\label{lem:strongimps}
	If $G$ is a graph on $n$ vertices with $\Delta(G)\leq r$, where $r\geq 2$, and $G$ satisfies the strong inequalities for $t\geq 3$, then
	\[
		\k(G)\leq 1 + \frac{n}{r-1}(2^r-2).
	\]
\end{lemma}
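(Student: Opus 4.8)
The plan is to iterate the strong inequalities (\ref{eqn:strong}) so as to bound every $\k_t(G)$ by a fixed multiple of $\k_2(G)=e(G)$, then sum the resulting binomial series in closed form, and finally substitute the trivial degree bound $e(G)\le nr/2$.

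\textbf{Step 1: a uniform bound on the clique counts.} I claim that for every $t\ge 2$,
\[
	\k_t(G)\le \frac{2}{r(r-1)}\binom{r}{t}\,\k_2(G).
\]
For $2\le t\le r$ this follows by applying (\ref{eqn:strong}) repeatedly down to $t=2$: the product $\prod_{j=3}^{t}\frac{r-j+1}{j}$ telescopes to $\frac{(r-2)(r-3)\cdots(r-t+1)}{3\cdot4\cdots t}=\frac{2(r-2)!}{t!\,(r-t)!}$, and the elementary identity $t(t-1)\binom{r}{t}=r(r-1)\binom{r-2}{t-2}$ rewrites this coefficient as $\frac{2}{r(r-1)}\binom{r}{t}$. (At $t=2$ the bound is the trivial equality $\k_2=\k_2$.) For $t=r+1$ the same iteration picks up the factor $r-(r+1)+1=0$, so the strong inequality at $t=r+1$ forces $\k_{r+1}(G)=0$, which matches $\binom{r}{r+1}=0$; and for $t\ge r+2$ both sides vanish. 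Hence the displayed inequality holds for all $t\ge 2$.

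\textbf{Step 2: summing up.} Using $\sum_{t=0}^{r}\binom{r}{t}=2^r$, summing the bound of Step 1 over $t\ge 2$ gives
\[
	\sum_{t\ge 2}\k_t(G)\le \frac{2\,\k_2(G)}{r(r-1)}\bigl(2^r-1-r\bigr).
\]
Since $\k_2(G)=e(G)\le nr/2$ by the degree bound, the right-hand side is at most $\frac{n(2^r-1-r)}{r-1}$. Therefore
\[
	\k(G)=1+n+\sum_{t\ge 2}\k_t(G)\le 1+n+\frac{n(2^r-1-r)}{r-1}=1+\frac{n}{r-1}\bigl(2^r-2\bigr),
\]
as claimed.

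The only real content is the closed-form evaluation of the sum, and the binomial identity $t(t-1)\binom{r}{t}=r(r-1)\binom{r-2}{t-2}$ does all of it, so I do not anticipate a genuine obstacle; the one point needing a moment's care is the boundary value $t=r+1$, where the strong inequality actually forces $\k_{r+1}(G)=0$ rather than the bound being merely dominated by a zero binomial coefficient, but this is automatic from the iteration.
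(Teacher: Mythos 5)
Your proof is correct and follows essentially the same route as the paper: iterate the strong inequalities down to $t=2$, recognize the resulting coefficients as $\binom{r}{t}$ up to the factor $\frac{2}{r(r-1)}$ (the paper writes the equivalent bound $\k_t(G)\le \frac{n}{r-1}\binom{r}{t}$ after substituting $e(G)\le nr/2$ first), and sum the binomial series. The only difference is the cosmetic one of when the edge bound $\k_2(G)\le nr/2$ is inserted.
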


\begin{proof}
	First note that $\k_0(G)=1$, $\k_1(G)=n$, and $\k_2(G)\leq \frac{nr}2$.  For $t\geq 3$, we 
note that, by induction using (\ref{eqn:strong}),
\[
	\k_t(G)\leq \frac{n}{r-1}\binom{r}t.
\]
Hence,
\begin{align*}
	\k(G)&\leq 1+n+\frac{nr}2+\frac{n}{r-1}\left(2^r-\binom{r}2-r-1\right)\\
	&=1+\frac{n}{r-1}(2^r-2).\qedhere
\end{align*}
\end{proof}

\begin{lemma}\label{lem:strong}
	If $r\geq 3$ and $n=a(r+1)+b$ where $a\geq 1$ and $0\leq b\leq r$, then
	\begin{equation}
		1 + \frac{n}{r-1}(2^r-2)\leq a(2^{r+1}-1)+2^b,\label{eq:strong}
	\end{equation}
	with strict inequality unless $r=3$ and $n=6$.
\end{lemma}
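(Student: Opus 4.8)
The plan is to turn (\ref{eq:strong}) into a one-variable inequality in $b$ by using that, after clearing denominators, it is \emph{linear} in $a$. Substituting $n = a(r+1) + b$ and multiplying through by $r - 1 > 0$, the claim (\ref{eq:strong}) is equivalent to
\[
	a\Bigl[(r-1)(2^{r+1}-1) - (r+1)(2^r-2)\Bigr] + (r-1)(2^b-1) - b(2^r-2) \ge 0.
\]
A one-line expansion identifies the coefficient of $a$ as $P := (r-3)2^r + (r+3)$, and $P > 0$ for all $r \ge 3$. Setting $Q(b) := (r-1)(2^b-1) - b(2^r-2)$, the left-hand side equals $aP + Q(b) = (a-1)P + \bigl(P + Q(b)\bigr)$. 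Since $(a-1)P \ge 0$, with equality iff $a = 1$, the whole problem reduces to showing $h(b) := P + Q(b) \ge 0$ for $0 \le b \le r$ and pinning down when equality occurs.

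I would bound $h(b)$ by splitting on the range of $b$. For $b \le r-3$, replacing $2^r - 2$ by the larger $2^r$ gives $h(b) > (r-3-b)2^r + (r+3) + (r-1)(2^b-1) \ge 0$, with all three summands nonnegative and the middle one strictly positive. This crude estimate is too wasteful for $b$ near $r$, so for the remaining values $b \in \{r-2, r-1, r\}$ I would just evaluate $h$ exactly; the computations collapse to $h(r-2) = (r-5)2^{r-2} + 2r$, $h(r-1) = (r-5)2^{r-1} + 2r+2$, and $h(r) = (r-4)2^r + 2r+4$, each of which is $> 0$ once $r \ge 4$ (the values at $r = 4$, namely $4$, $2$, and $12$, are checked directly). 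The one place the inequality is not strict is $r = 3$: there $h(b) = 2^{b+1} - 6b + 4$, taking the values $6, 2, 0, 2$ at $b = 0, 1, 2, 3$, so $h \ge 0$ with equality exactly at $b = 2$.

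Putting the pieces together, $aP + Q(b) \ge 0$ always; moreover equality forces both $a = 1$ and $h(b) = 0$, hence $r = 3$, $b = 2$, and $n = a(r+1)+b = 6$, while these values conversely produce equality. I do not foresee a real obstacle: the argument is elementary once the linearity in $a$ is exploited. The only thing to be careful about is that the simple estimate $2^r - 2 < 2^r$ loses too much near $b = r$, which is precisely why the top three values of $b$ must be handled by an explicit (but trivial) computation, and that $r = 3$ (and to a lesser extent $r = 4$) needs to be inspected separately as the boundary case of the monotone bounds.
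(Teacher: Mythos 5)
Your proof is correct and follows essentially the same route as the paper's: exploit linearity in $a$ to reduce to $a=1$, handle $b\le r-3$ by a crude comparison with $2^r$, and settle $b\in\{r-2,r-1,r\}$ (and small $r$) by direct computation, locating equality at $r=3$, $b=2$, $a=1$. Your explicit closed forms $h(r-2)=(r-5)2^{r-2}+2r$, $h(r-1)=(r-5)2^{r-1}+2r+2$, $h(r)=(r-4)2^r+2r+4$ merely make precise the checks the paper describes as ``straightforward,'' and your equality analysis matches the paper's conclusion.
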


\begin{proof}
	It will be convenient to address first the case when $a=1$ and $b=0$.  In this case, we are claiming 
	\begin{equation}
		\frac{r+1}{r-1}(2^{r}-2)<2^{r+1}-1,\label{eq:blah}
	\end{equation}
	which is true for all $r\geq 3$.  
	
	In general, the two sides of (\ref{eq:strong}) are each linear in $a$, with respective coefficients the two sides of (\ref{eq:blah}).  Thus, it suffices to prove the result for $a=1$.  In this case, we need to show
	\begin{align*}
		1+\frac{r+1+b}{r-1}(2^r-2)&< 2^{r+1}-1+2^b\\
		\shortintertext{which is equivalent to}
		(r-1)(2-2^b)-2(r+1+b)&<(r-3-b)2^r.
	\end{align*}
	The last inequality is clearly true if $b\leq r-3$, since the left hand side is negative and the right hand side is nonnegative.  For the remaining cases, i.e., $b=r-2, r-1, r$, we need to check whether
	\[
		(b-(r-3))2^r<(r-1)(2^b-2)+2(r+1+b).
	\]
	This is straightforward to check in each case when $r\geq 5$ and easy to check in the other cases.  In the case $r=3$ and $b=2$, we get equality.
\end{proof}

\begin{corollary}\label{cor:strong}
	Let $G$ be a graph on $n$ vertices with $\Delta(G)\leq r$ which satisfies the strong inequalities for $t\geq 3$.  If $n=a(r+1)+b$ with $a\geq 1$ and $0\leq b\leq r$, then 
	\[
		\k(G)<\k(aK_{r+1}\cup K_b).
	\] 
\end{corollary}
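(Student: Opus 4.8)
The plan is to chain the two preceding lemmas. First I would record the elementary count
\[
	\k(aK_{r+1}\cup K_b)=1+a(2^{r+1}-1)+(2^b-1)=a(2^{r+1}-1)+2^b,
\]
so that the inequality to be proved is exactly $\k(G)<a(2^{r+1}-1)+2^b$. Assume for the moment that $r\ge 3$. Then Lemma~\ref{lem:strongimps} gives $\k(G)\le 1+\tfrac n{r-1}(2^r-2)$, and Lemma~\ref{lem:strong} gives $1+\tfrac n{r-1}(2^r-2)\le a(2^{r+1}-1)+2^b$, with strict inequality unless $r=3$ and $n=6$. Concatenating these two bounds proves the corollary outright in every case except $(r,n)=(3,6)$.

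For that single borderline case $r=3$, $n=6$ (so $a=1$, $b=2$, and the right-hand side equals $15+4=19$) I would not invoke Lemma~\ref{lem:strongimps} but instead unwind the strong inequalities by hand: $\k_0(G)=1$, $\k_1(G)=6$, $\k_2(G)=e(G)\le 9$, $\k_3(G)\le\tfrac13\k_2(G)$, and $\k_t(G)=0$ for $t\ge 4$ (the multiplier $\tfrac{r-t+1}t$ is nonpositive there). Hence $\k(G)=7+e(G)+\k_3(G)$. If $G$ is not cubic then $e(G)\le 8$, so $\k_3(G)\le 2$ and $\k(G)\le 17$; and if $G$ is cubic then $e(G)=9$ and I would use the structural fact that a cubic graph on six vertices contains at most two triangles (for instance, $K_{3,3}$ and the triangular prism are the only cubic graphs on six vertices, with $0$ and $2$ triangles respectively), so that $\k_3(G)\le 2$ and $\k(G)\le 18<19$.

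Finally, for $r=2$ the strong inequality at $t=3$ reads $\k_3(G)\le 0$, so $\k_t(G)=0$ for all $t\ge 3$ and $\k(G)=1+n+e(G)\le 1+2n$; writing $n=3a+b$ and comparing with $7a+2^b$ gives the strict inequality whenever $a\ge 2$, and also when $a=1$, $b=0$. The only remaining cases are $n=4$ and $n=5$, where equality in $\k(G)\le 1+2n$ forces $G$ to be $2$-regular, i.e.\ $G=C_4$ or $G=C_5$ --- precisely the exceptional graphs of the Main Theorem, which are handled separately; for every other graph with $\Delta(G)\le 2$ the strict inequality holds. The real work of the argument is concentrated entirely in these tight cases: the arithmetic of Lemmas~\ref{lem:strongimps} and~\ref{lem:strong} is sharp exactly at $(r,n)=(3,6)$ and, for $r=2$, at $n\in\{4,5\}$, and at each of them one needs a short structural observation about graphs of small maximum degree (and, for $r=2$, the recognition that $C_4$ and $C_5$ are genuine equality cases rather than slack in the estimates).
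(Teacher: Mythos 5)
Your proposal is correct and, for $r\ge 3$, takes essentially the paper's route: chain Lemmas~\ref{lem:strongimps} and~\ref{lem:strong}, and settle the single equality case $(r,n)=(3,6)$ by noting that attaining the bound forces $3$-regularity and that the only cubic graphs on six vertices, $K_{3,3}$ and the triangular prism, have at most two triangles, so $\k(G)\le 18<19$; this is exactly the paper's closing check, just spelled out in more detail. The one genuine divergence is your $r=2$ analysis, and it is worth noting: you correctly find that $C_4$ and $C_5$ satisfy the strong inequalities ($\k_3=0$) yet attain equality with $\k(K_3\cup K_1)$ and $\k(K_3\cup K_2)$, so the strict inequality is literally false for $r=2$. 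The paper implicitly reads the corollary with $r\ge 3$ (Lemma~\ref{lem:strong} requires $r\ge 3$, and the Main Theorem's proof only invokes the corollary after assuming $r\ge 3$), so this is not a gap in your argument but a legitimate observation that the statement should carry the hypothesis $r\ge 3$ (or exclude the exceptional graphs $C_4$ and $C_5$, which are precisely the equality cases of the Main Theorem).
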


\begin{proof}
	This is immediate from Lemmas~\ref{lem:strongimps} and \ref{lem:strong} except when $n=6$ and $r=3$.  In this case, $\k(G)$ could only be as large as the left hand side of (\ref{eq:blah}) if $G$ were $3$-regular.  Neither of the $3$-regular graphs on $6$ vertices achieves $\k(G)=19$.
\end{proof}


\section{Discharging} 
\label{sec:discharging}

In this section, we discuss the case of the argument wherein every tight clique has large fixed loss.  Throughout this section, we let $G$ be a graph on $n$ vertices with $\Delta(G)\leq r$ and $T$ be a tight clique in $G$.  We set $t=\abs{T}$, $S=S_T$, $R=R_T$, and $s=\abs{S}$.  In order to understand the structure of tight cliques, we make the following definition.

\begin{definition}
	A \emph{cluster} is a maximal tight clique.  If $T$ is a cluster in a graph $G$ and $C$ is a $c$-clique with $\abs{T\cap C}=c-1$, then we say that $C$ is \emph{associated with $T$}.
\end{definition}

Note that $x$ and $y$ belong to some common tight clique exactly if $N[x]=N[y]$.  In particular, the relation of belonging to some common tight clique is an equivalence relation, with clusters as the equivalence classes.

We note a consequence of Corollary~\ref{cor:fli}.

\begin{lemma}\label{lem:sbound}
	If $T$ is a cluster in $G$ with $\k(G_T)\leq \k(G)$, then $\fl(R)\geq 2^r+s2^t$ and $t< \log_2(s)$.
\end{lemma}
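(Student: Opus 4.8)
The plan is to read both assertions off Corollary~\ref{cor:fli} together with the extremal bound $\fl(R)\le s(2^{s-1}-1)$ of Theorem~\ref{thm:maxFL}, after one structural observation about $R=R_T$. Since $T$ is tight, $s=\abs{S}=\w(T)=r+1-t$, so $r=t+s-1$ and $2^r=2^{t+s-1}=2^t\cdot 2^{s-1}$; I use this throughout. The crux is the inequality $\ind(R)\le 2^{s-1}+1$. Granting it, the contrapositive of Corollary~\ref{cor:fli} turns the hypothesis $\k(G_T)\le\k(G)$ into $\fl(R)\ge 2^t(2^s-\ind(R)+s+1)$ (the denominator $2^s-\ind(R)+s+1\ge s+1$ is positive because $\ind(R)\le 2^s$), whence $\fl(R)\ge 2^t(2^{s-1}+s)=2^{t+s-1}+s\,2^t=2^r+s\,2^t$, which is the first claimed inequality. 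Combining this with Theorem~\ref{thm:maxFL} gives $2^r+s\,2^t\le\fl(R)<s\,2^{s-1}$, so $2^r<s\,2^{s-1}$, i.e. $2^t\cdot 2^{s-1}<s\cdot 2^{s-1}$, i.e. $t<\log_2 s$.

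So the real work is to prove $\ind(R)\le 2^{s-1}+1$, and the step with genuine content is showing that \emph{$R$ has no isolated vertex} --- this is the one place where the hypothesis that $T$ is a \emph{cluster}, and not merely a tight clique, is needed. Suppose $v\in S$ were isolated in $R$. Then $v$ is adjacent in $G$ to every other vertex of $S$ and, as a member of $S$, to every vertex of $T$, so $N[v]\supseteq T\cup S$; since $\abs{T\cup S}=r+1\ge\abs{N[v]}$, in fact $N[v]=T\cup S$. The identical computation (tightness of $T$ and $\Delta(G)\le r$) gives $N[x]=T\cup S$ for every $x\in T$, so $N[v]=N[x]$. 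Since clusters are exactly the equivalence classes of the relation $N[u]=N[v]$, this places $v$ in the cluster of $x$, which is $T$; but $v\in S$ and $S\cap T=\0$, a contradiction. In particular $R$ is not a single vertex, so $s\neq 1$; and $s\neq 0$, since then $\fl(R)=0$ and Corollary~\ref{cor:fli} (with $\ind(R)\le 2^s$) would force $\k(G_T)>\k(G)$. Hence $s\ge 2$.

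Finally, a graph $R$ on $s\ge 2$ vertices with no isolated vertex satisfies $\ind(R)\le 2^{s-1}+1$: pass to a spanning forest of $R$; a tree on $k\ge 2$ vertices has at most $2^{k-1}+1$ independent sets (delete a leaf and its neighbour and induct), and $(2^{a-1}+1)(2^{b-1}+1)\le 2^{a+b-1}+1$ for all $a,b\ge 2$, so multiplying these bounds over the components (each of size $\ge 2$) gives $\ind(R)\le 2^{s-1}+1$. Substituting into the chain of the first paragraph completes the proof. The only real obstacle is the ``no isolated vertex'' claim; everything after it is arithmetic together with the already-established Corollary~\ref{cor:fli} and Theorem~\ref{thm:maxFL}.
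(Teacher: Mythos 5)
Your proof is correct and follows essentially the same route as the paper's: the contrapositive of Corollary~\ref{cor:fli}, the bound $\ind(R)\le 2^{s-1}+1$ obtained from $\delta(R)\ge 1$ (which the paper deduces from the cluster property and cites from Galvin, and which you instead verify directly and prove via spanning forests), and Theorem~\ref{thm:maxFL} to conclude $t<\log_2 s$. Your explicit justification that $R$ has no isolated vertex and your handling of the degenerate cases $s\le 1$ are elaborations of steps the paper leaves implicit, not a different argument.
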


\begin{proof}
	From Corollary~\ref{cor:fli}, we know that if $\k(G_T)\leq \k(G)$, then $2^t\leq \fl(R)/(2^s-i(R)+s+1)$.  Since $T$ is a cluster, we have $\delta(R)\geq 1$.  By a result of Galvin \cite{G}, since $\delta(R)\geq 1$, we know that $i(R)\leq 2^{s-1}+1$.  Hence, 
	\[
		\fl(R)\geq 2^t(2^s-i(R)+s+1) \geq 2^t(2^{s-1}+s)=2^r+s2^t.
	\]
	Also, by Theorem~\ref{thm:maxFL},
	\begin{align*}
		t&\leq \log_2\frac{\fl(R)}{2^s-i(R)}\\
		&\leq \log_2\frac{s(2^{s-1}-1)}{2^{s-1}+s}\\
		&< \log_2 s.\qedhere
	\end{align*}
\end{proof}

The main result of this section shows that if the fixed loss of a cluster is large, then there are many cliques associated with that cluster having low weight.  In the proof of the Main Theorem we will transfer weight from tight cliques inside a given cluster to cliques of low weight associated with that cluster, proving that $G$ satisfies the strong inequalities.

\begin{lemma}\label{lem:clusnum}
	Let $r\geq 3$.  If $T$ is a cluster in $G$, $\k(G_T)\leq \k(G)$, and $R$ has no $K_2$ component, then for every $2\leq c\leq t$, there are at least $2\binom{t}{c}$ $c$-cliques associated with $T$ having weight at most $r-c-1$.
\end{lemma}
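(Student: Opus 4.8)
We need to show that if $T$ is a cluster with $\k(G_T) \leq \k(G)$ (so $T$ has large fixed loss by Lemma~\ref{lem:sbound}), then for every $2 \leq c \leq t$ there are at least $2\binom{t}{c}$ $c$-cliques associated with $T$ of weight at most $r-c-1$. Recall a $c$-clique $C$ is associated with $T$ if $|T \cap C| = c-1$, so $C = (T \setminus \{x\}) \cup \{y\}$ for some $x \in T$ and some $y \in N(x)$ (and $y$ adjacent to all of $T \setminus \{x\}$). Since every vertex of $T$ has the same closed neighborhood (all of $T \cup S$ plus nothing else, because $T$ is tight and maximal), any $y \in S$ together with any $(c-1)$-subset of $T$ forms such a $c$-clique. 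So the natural candidates are the cliques $(T \setminus F) \cup \{y\}$ where $F$ is a single vertex of $T$ — but to get $2\binom{t}{c}$ cliques we should count $(c-1)$-subsets of $T$ paired with vertices $y$; there are $\binom{t}{c-1}$ such subsets, and $\binom{t}{c-1} = \frac{c}{t-c+1}\binom{t}{c}$, so we need roughly $\frac{2(t-c+1)}{c}$ choices of $y \in S$ per subset, i.e., on the order of two suitable vertices $y$ when $c$ is close to $t$. The key point must be that vertices $y \in S$ of small degree in $G$ (equivalently, large degree in $R$) yield low-weight associated cliques, and that $S$ contains enough such vertices because $\fl(R)$ is large.

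**The core estimate.** First I would pin down the weight of an associated clique $C = A \cup \{y\}$ where $A \subseteq T$, $|A| = c-1$, and $y \in S$: its common neighborhood is $\bigl(\bigcap_{x \in A} N(x)\bigr) \cap N(y) = (T \cup S \setminus A) \cap N(y)$. Since $y \in S$ is adjacent to all of $T$ and to $d_{G}(y) - t$ vertices of $S \setminus \{y\}$ (as $y$'s only neighbors lie in $T \cup S$), and $A \subseteq T$ with $|A| = c-1$, we get $\w(C) = (t - (c-1)) + |N_G(y) \cap (S \setminus \{y\})| = (t-c+1) + (s - 1 - d_R(y))$. Using $t + s = r+1$ (as $T \cup S$ has size $r+1$), this equals $r - c + 1 - d_R(y)$. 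So $\w(C) \leq r - c - 1$ exactly when $d_R(y) \geq 2$. Thus the cliques we want are precisely $A \cup \{y\}$ with $A \in \binom{T}{c-1}$ and $y \in S$ with $d_R(y) \geq 2$; their number is $\binom{t}{c-1} \cdot |\{y \in S : d_R(y) \geq 2\}|$.

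**Bounding the number of high-degree vertices in $R$.** It remains to show $\binom{t}{c-1} \cdot m \geq 2\binom{t}{c}$ where $m = \#\{y \in S : d_R(y) \geq 2\}$, i.e., $m \geq \frac{2c}{t-c+1}$. Since $2 \leq c \leq t$ we have $\frac{2c}{t-c+1} \leq 2t$, but more usefully for $c = t$ it is $2t$ and for $c \leq t-1$ it is at most $\frac{2(t-1)}{2} = t-1 < 2t$; in all cases it suffices to prove $m \geq 2t$... wait, that is too strong when $c$ is small. Let me instead aim to show $m \geq 2t - 1$, which handles $c = t$ (need $m \geq 2t$ — still a gap of $1$). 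The cleaner route: I would show that in fact the number of associated $c$-cliques of weight $\leq r-c-1$ that we can exhibit is at least $\binom{t}{c-1} \cdot m$, and separately use Lemma~\ref{lem:sbound}'s bound $\fl(R) \geq 2^r + s2^t$ to force $m$ large. Here is the mechanism: vertices of degree $0$ or $1$ in $R$ contribute little to $\fl(R)$. Since $T$ is a cluster, $\delta(R) \geq 1$, so there are no degree-$0$ vertices; let $\ell = \#\{y \in S : d_R(y) = 1\}$, so $m = s - \ell$. By Theorem~\ref{thm:flell} (and Lemma~\ref{lem:k2}, which lets us assume $R$ has no $K_2$ component — this is exactly the hypothesis of our lemma), $\fl(R) \leq 2^s + (s - \ell - 2)2^{s-\ell-1}$. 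Combining with $\fl(R) \geq 2^r + s2^t = 2^{r+1}/2 + s2^t$ and $r + 1 = s + t$, so $2^r = 2^{s+t-1}$, I would derive a lower bound on $m = s - \ell$. Roughly, $2^{s+t-1} \leq 2^s + (m-2)2^{s-\ell-1}$ forces $(m-2)2^{s-\ell-1} \gtrsim 2^{s+t-1}$, hence $m - 2 \gtrsim 2^{t-1+\ell}/... \geq 2^{t}$ since $\ell \geq 0$ — wait, need $2^{s-\ell-1}$, so $m-2 \geq 2^{s+t-1}/2^{s-\ell-1} \cdot(\text{something}) = 2^{t+\ell}(\text{something})$, giving $m \geq 2^t$, which is comfortably $\geq \frac{2c}{t-c+1}$ for all $2 \leq c \leq t$ once $t \geq 1$ (since $\frac{2c}{t-c+1} \leq 2t \leq 2^t$ for $t \geq 2$, and small $t$ like $t=1$ is vacuous as $c \geq 2 > t$).

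**Main obstacle and remaining care.** The main obstacle is making the inequality chain in the last paragraph fully rigorous: I need to carefully combine $\fl(R) \geq 2^r + s 2^t$ (from Lemma~\ref{lem:sbound}) with $\fl(R) \leq 2^s + (s-\ell-2)2^{s-\ell-1}$ (from Theorem~\ref{thm:flell}, valid since $R$ has no $K_1$ or $K_2$ component — the $K_1$-freeness being $\delta(R) \geq 1$) while tracking the substitution $s + t = r + 1$, and then convert the resulting bound on $\ell$ into $m = s - \ell \geq \frac{2c}{t-c+1}$ uniformly over $2 \leq c \leq t$. A subtlety: one must check the edge cases where $s - \ell - 2$ is negative or $\ell$ is near $s$, and confirm Theorem~\ref{thm:flell} still applies; also $t \geq 2$ should be assumed or deduced (if $t = 1$ the statement is vacuous since $c \geq 2$), and $s \geq t$ or at least $s$ reasonably large follows from $\fl(R) \geq 2^r + s2^t$ being large. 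Once $m \geq 2^t$ (or even $m \geq 2t$) is established, the conclusion $\binom{t}{c-1} m \geq \frac{2c}{t-c+1}\binom{t}{c} \cdot \frac{t-c+1}{c}\cdot\frac{c}{t-c+1}$... more simply $\binom{t}{c-1}m = \frac{c}{t-c+1}\binom{t}{c} m \geq 2\binom{t}{c}$ follows since $\frac{cm}{t-c+1} \geq 2$ whenever $m \geq 2$ and $c \geq t-c+1$, and for the remaining range $c < t-c+1$ one uses the larger lower bound on $m$; I expect $m \geq 2^t$ to settle everything at once.
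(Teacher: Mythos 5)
Your plan coincides with the paper's proof in its two main steps: (i) an associated $c$-clique is exactly $A\cup\{y\}$ with $A\in\binom{T}{c-1}$ and $y\in S$, and its weight is $r+1-c-d_R(y)$, so the low-weight ones are precisely those with $d_R(y)\ge 2$, giving $\binom{t}{c-1}\cdot m$ of them where $m=\card\setof{y\in S}{d_R(y)\ge 2}$; and (ii) $m$ is forced to be large by playing the lower bound $\fl(R)\ge 2^r+s2^t$ of Lemma~\ref{lem:sbound} against the upper bound of Theorem~\ref{thm:flell} (legitimate here since $\delta(R)\ge 1$ because $T$ is a cluster, and the no-$K_2$-component hypothesis is assumed). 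So this is essentially the paper's argument, not a different route.

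The one genuine deficiency is that step (ii) is never actually carried out: the decisive inequality is left with placeholders, and the target $m\ge 2^t$ is only ``expected.'' You also transiently invert the requirement: from $\binom{t}{c-1}m\ge 2\binom{t}{c}$ the condition is $m\ge \frac{2(t-c+1)}{c}$ (worst case $c=2$, i.e.\ $m\ge t-1$), not $m\ge\frac{2c}{t-c+1}$; your final paragraph has it right, but the inflated intermediate target $2t$ or $2^t$ comes from the inverted version. The paper closes the step with exactly the weaker bound $m\ge t-1$, argued by contradiction: if $\ell\ge s-t+2$ then Theorem~\ref{thm:flell} gives $\fl(R)\le 2^s+(t-4)2^{t-3}\le 2^{r+1-t}+\tfrac18 r\log_2 r$ (using $s\le r$ and $t<\log_2 s$ from Lemma~\ref{lem:sbound}), contradicting $\fl(R)\ge 2^r$ for $r\ge 3$; then $(t-1)\binom{t}{c-1}\ge 2\binom{t}{c}$ for $c\ge 2$ finishes. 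For the record, your stronger claim is salvageable: writing $m=s-\ell$, the two fixed-loss bounds give $(m-2)2^{m-1}\ge 2^s(2^{t-1}-1)$, and since Lemma~\ref{lem:sbound} gives $s\ge 2^t+1$, any $m\le 2^t-1$ makes the left side smaller than $2^{2^t+t-2}<2^{s+t-2}\le 2^s(2^{t-1}-1)$, so indeed $m\ge 2^t\ge 2t\ge\frac{2(t-c+1)}{c}$ --- but this calculation (or the paper's contradiction) must be supplied for the proof to be complete.
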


\begin{proof}
	If $C$ is a $c$-clique associated with $T$ and $x$ is the unique element of $S\cap C$, then $\w(C)=r+1-c-d_R(x)$.  Thus, all associated cliques containing vertices of degree at least two in $R$ have weight at most $r-c-1$.  We will show that there are at least $t-1$ such vertices.  We let $\ell$ be the number of vertices of $R$ of degree one.  If $\ell\geq s-t+2$, then by Theorem~\ref{thm:flell}, we would have $\fl(R)\leq 2^s+(t-4)2^{t-3}$.  Note that this would imply
	\[
		2^r\leq 2^r+s2^t\leq \fl(R)\leq 2^{r+1-t}+(t-4)2^{t-3}\leq 2^{r+1-t}+\frac{1}8 s\log_2 s\leq 2^{r+1-t}+\frac{1}8 r\log_2 r,
	\]
	by Lemma~\ref{lem:sbound}.  Since $t\geq 2$, we see
	\[
		2^{r}\leq 2^{r+1-t}+\frac{1}8 r\log_2 r \quad\implies\quad 2^{r}\leq \frac{1}{4}r\log_2 r,
	\]
	a contradiction for $r\geq 3$.
	
	Let $h$ be the number of vertices in $R$ of degree at least two.  Having shown that $\ell\leq s-t+1$, we know that $h\geq t-1$.  The number of $c$-cliques of weight at most $r-c-1$ associated with $T$ is
	\[
		h\binom{t}{c-1}\geq (t-1)\binom{t}{c-1}\geq \frac{2(t-c+1)}{c}\binom{t}{c-1}=2\binom{t}c,
	\]
	for $c\geq 2$.
\end{proof}


\section{Proof of the Main Theorem} 
\label{sec:proof}

\begin{main}
	For all $n,r\in \N$, write $n=a(r+1)+b$ with $0\le b\le r$. If $G$ is a graph on $n$ vertices with $\Delta(G)\leq r$, then 
	\begin{equation}
		\k(G) \leq \k(aK_{r+1} \cup K_b),\label{eqn:theone}
	\end{equation}
	with equality if and only if $G=aK_{r+1}\cup K_b$, or $r=2$ and $G=(a-1)K_3\cup C_4$ or $(a-1)K_3\cup C_5$.
\end{main}

\begin{proof}
	If $r=1$, the result is trivial.  If $r=2$, it is almost as trivial: for $n=3a+b$ with $0\leq b\leq 2$, we have $e(G)\leq e(aK_3\cup K_b)+1$ with equality if and only if $b\neq 0$ and $G$ is $2$-regular.  Also, $k_3(G)\leq a$.  However, one cannot have equality in both bounds.  The graphs described in the statement of the theorem are the only examples achieving equality in (\ref{eqn:theone}).
	
	We assume henceforth that $r\geq 3$.  We proceed by induction on $n$, noting that the result is trivial for $n< r+1$, i.e., $a=0$.  Consider then a graph $G$ as in the statement of the theorem with $\k(G)$ maximal.  If $K_{r+1}\of G$, then we are done by induction.  If $G$ has no tight cliques of size at least two, then $G$ satisfies the strong inequalities for $t\geq 3$ and, so by Corollary~\ref{cor:strong}, we have $\k(G)<\k(aK_{r+1} \cup K_b)$, a contradiction to the choice of $G$.  
	
	The remaining cases involve graphs $G$ containing tight cliques of size at least two.  We note that $G$ cannot contain a tight clique $T$ with $\k(G_T)>\k(G)$ (by the maximality of $\k(G)$) nor can it contain a tight clique $T$ such that $R_T$ has a $K_2$ component (by Lemma~\ref{lem:k2}).
	
	Thus, $G$ has some tight cliques of size at least two, all of which have $\k(G_T)\leq \k(G)$.  In this final case, we will use the results of Section~\ref{sec:discharging} to show that, in fact, $G$ satisfies the strong inequalities.  We will define new weights on all cliques in the following fashion.  We will reduce the weights of tight cliques by one.  If a clique $C$ is associated with a cluster, we increase its weight by one half.  It is possible for a clique of size two to be associated with two clusters; in this case, we increase its weight by one.  Larger cliques cannot be associated with more than one cluster since they must intersect each cluster in $c-1$ vertices, hence the clusters themselves would intersect.  Denoting these new weights by $w'(C)$, we observe first that Lemma~\ref{lem:clusnum} implies $\sum_{C\in \K_{t}(G)} \w(C)\leq \sum_{C\in \K_{t}(G)} w'(C)$ for $t\geq 2$.  Also, for all $c$-cliques $C$, we have $w'(C)\leq r-c$.  Using (\ref{eq:kbound}), we get
	\[
		k_t(G)=\frac{1}{t}\sum_{C\in \K_{t-1}(G)} \w(C)\leq \frac{1}{t}\sum_{C\in \K_{t-1}(G)} w'(C) \leq \frac{r-t+1}{t} \k_{t-1}(G),
	\]
	for $t\geq 3$, i.e., $G$ satisfies the strong inequalities.
\end{proof}


\bibliographystyle{amsplain}
\bibliography{maxcliquedegree}

\end{document}